\newtheorem{thm}{Theorem}[section]
\newtheorem{cor}[thm]{Corollary}
\newtheorem{lem}[thm]{Lemma}
\newtheorem{prop}[thm]{Proposition}
\newtheorem{defn}[thm]{Definition}
\newtheorem{rmk}[thm]{Remark}
\numberwithin{equation}{section}
\newcommand{\comment}[1]{}
\newcommand{\bi}{\begin{itemize}}
\newcommand{\ei}{\end{itemize}}
\newcommand{\ben}{\begin{enumerate}}
\newcommand{\een}{\end{enumerate}}
\newcommand{\be}{\begin{equation}}
\newcommand{\ee}{\end{equation}}
\newcommand{\bea}{\begin{eqnarray}}
\newcommand{\eea}{\end{eqnarray}}
\newcommand{\bal}{\begin{align}}
\newcommand{\eal}{\end{align}}
\newcommand{\ba}{\begin{array}}
\newcommand{\ea}{\end{array}}
\DeclareFontFamily{U}{wncy}{}
\DeclareFontShape{U}{wncy}{m}{n}{<->wncyr10}{}
\DeclareSymbolFont{mcy}{U}{wncy}{m}{n}
\DeclareMathSymbol{\Sha}{\mathord}{mcy}{"58}
\begin{document}

\title{$2$-Superirreducibility of univariate polynomials over $\mathbb{Q}$ and $\mathbb{Z}$}
\author{Lara Du}



\date{\today}


\begin{abstract} 
This paper investigates whether or not polynomials that are irreducible over $\mathbb{Q}$ and $\mathbb{Z}$ can remain irreducible under substitution by all quadratic polynomials. It answers this question in the negative in the degree $2$ and $3$ cases and provides families of examples in both the affirmative and the negative categories in the degree $4$ case. Finally this paper explores what happens in higher degree cases, providing a family of examples in the negative category and offering a conjectured family for the positive category.
\end{abstract}

\maketitle


\section{Introduction} \label{intro}

\par The goal of this paper is to introduce and investigate a property of polynomials stricter than the familiar one of irreducibility. Consider a commutative integral domain $I$ and a polynomial $f\in I[x]$. When $k\in \mathbb{N}$, we say that $f$ is $k$\textit{-superirreducible} over $I$ when $f$ is irreducible and for all non-constant polynomials $g \in I[x]$ of degree at most $k$, the polynomial $f(g(x))$ remains irreducible. This definition requires irreducibility to hold simultaneously for all polynomials in a $(k+1)$-parameter family of polynomials with coefficients in $I$. In this paper, we will consider the cases $I=\mathbb{Z}$ and $I=\mathbb{Q}$.

\par The concept of superirreducibility over $\mathbb{Q}$  was studied by Schinzel \cite{schinzel} in 1967, though without giving it a name. Schinzel proved a criterion for non-existence which we consider below. Our definition extends this concept to the class of domains not having certain bad characteristics. We extend Schinzel's nonexistence criterion  as follows:

\begin{thm} \label{schinzelz}
For every $d\geq 3$ and domain $R$ of characteristic not dividing $d-1$, there are no $(d-1)$-superirreducibles over $R$ of degree $d$. 
If $R$ is a field of any characteristic, there are no $(d-1)$-superirreducibles over $R$ of degree $d$.
\end{thm}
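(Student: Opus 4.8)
The plan is to discard the trivial case and then, for any irreducible $f$ of degree $d$, produce a substitution $g$ of degree strictly between $1$ and $d$ for which $f(g(x))$ visibly carries a factor of degree $d$, hence is reducible. So first assume $f$ is irreducible over $R$ (otherwise $f$ is already not superirreducible), fix a root $\theta$ of $f$ in an algebraic closure of $K:=\operatorname{Frac}(R)$, and record the mechanism. Suppose $\beta=\phi(\theta)$ is a \emph{primitive element} of $K(\theta)/K$ with $2\le\deg_K\phi\le d-1$. Let $g\in K[x]$ be the unique polynomial of degree $\le d-1$ with $g(\beta)=\theta$; it exists since $\beta$ is primitive, and $\deg g\ge 2$, because $\deg g\le 1$ would force $\phi(\theta)\in K+K\theta$ and hence $\phi\equiv(\text{linear})\pmod f$, contradicting $2\le\deg\phi\le d-1$. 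Then $\beta$ is a root of $f(g(x))$ (as $f(g(\beta))=f(\theta)=0$), its minimal polynomial over $K$ has degree $d=[K(\theta):K]$, while $\deg f(g(x))=d\deg g\ge 2d>d$; so that minimal polynomial is a proper factor and $f(g(x))$ is reducible over $K$. When $R=K$ is a field this already finishes the proof, modulo the existence of $\phi$.

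For the field case it therefore remains to produce such a $\phi$. The polynomials $\phi$ of degree $\le d-1$ biject with the elements $\phi(\theta)$ of $K(\theta)$, and those with $2\le\deg\phi\le d-1$ are exactly the elements outside the plane $K\oplus K\theta$ (here $d\ge 3$ guarantees this range is nonempty). Over an infinite field, $K(\theta)/K$ is simple, hence by Steinitz's theorem has only finitely many intermediate fields, so the non-primitive elements lie in a finite union of proper $K$-subspaces; a finite union of proper subspaces — even after adjoining $K\oplus K\theta$ — cannot exhaust $K(\theta)$, which yields a valid $\phi$; this covers the separable and inseparable cases alike. Over a finite field $K=\mathbb{F}_q$ (necessarily perfect, so $f$ is separable) a short count suffices: the $q^d-q^2$ admissible values $\phi(\theta)$ outnumber the at most $\sum_{\ell\mid d}q^{d/\ell}$ non-primitive elements for every $d\ge 3$. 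Either way a suitable $\phi$ exists, so the field statement holds in every characteristic.

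For a general domain $R$ with $\operatorname{char}R\nmid d-1$, two further issues must be handled: $g$ must actually lie in $R[x]$, and the factorization, so far only over $K$, must be exhibited over $R$. The plan is to run the same construction with $\phi\in R[t]$ and to split
$f(g(x))=a_d\prod_i\bigl(g(x)-\theta_i\bigr)=a_d\Bigl[\prod_i\bigl(x-\phi(\theta_i)\bigr)\Bigr]\Bigl[\prod_i\tfrac{g(x)-\theta_i}{x-\phi(\theta_i)}\Bigr]$,
where the first bracket is, up to a power of $a_d$, the resultant $\operatorname{Res}_t\!\bigl(f(t),x-\phi(t)\bigr)\in R[x]$ of degree $d$; the hypothesis that $d-1$ is nonzero in $R$ is exactly what keeps $\deg g=d-1$ (so a degree-$(d-1)$ polynomial retains that degree and its divided differences have the expected degrees), which is what makes the count $d\deg g>d$ still force a proper split. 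The main obstacle, and the single place the characteristic hypothesis is genuinely needed, is this descent: choosing $\phi$ and normalizing the two factors so that both land in $R[x]$ with positive degree, rather than merely over the fraction field.
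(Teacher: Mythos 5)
Your argument for the field case is correct, and it takes a genuinely different route from the paper's. The paper is fully constructive: it sets $h(x)=x^df(1/x)$ and $g(x)=-h(x)/x+1/x$, so that $f(g(x))\equiv f(1/x)\equiv 0\pmod{h(x)}$, exhibiting an explicit factor of degree $d$. You instead prove mere \emph{existence} of a suitable $g$: you find a primitive element $\beta=\phi(\theta)$ of $K(\theta)/K$ lying outside $K\oplus K\theta$ (via Steinitz's finiteness of intermediate fields together with the fact that a vector space over an infinite field is not a finite union of proper subspaces, plus a separate count over finite fields) and then invert by interpolation to get $g$ with $2\le\deg g\le d-1$ and $g(\beta)=\theta$, whence the minimal polynomial of $\beta$ is a proper factor of $f(g(x))$. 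This is a clean alternative that, like the paper's field argument, correctly uses no hypothesis on the characteristic; the trade-off is that it is non-explicit.

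The domain case, however, is not proved. You yourself write that ``the main obstacle \dots is this descent: choosing $\phi$ and normalizing the two factors so that both land in $R[x]$ with positive degree'' --- and then stop there. That descent is precisely the content of the theorem over $R$, and it is where essentially all of the work in the paper's proof lives: one must arrange for $g$ itself to have coefficients in $R$ (the paper does this by shifting the variable by $-a_{d-1}/((d-1)a_d)$ and rescaling by $A=(d-1)^2a_d^2$, and this is exactly where the hypothesis that the characteristic does not divide $d-1$ enters, to guarantee $A\neq 0$ and that the shift is defined), and then one must descend the factorization from $\operatorname{Frac}(R)[x]$ to $R[x]$, which the paper does via Gauss's Lemma. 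Your stated role for the characteristic hypothesis --- that it ``keeps $\deg g=d-1$'' --- does not correspond to any step of your construction (your degree count only needs $\deg g\ge 2$, which the characteristic cannot affect) and is not where the hypothesis is actually needed. As written, the integral case is a plan with its essential step missing rather than a proof.
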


\begin{cor} \label{deg3none}
There are no $d-1$-superirreducible polynomials of degree $d$ over $\mathbb{Q}$ or over $\mathbb{Z}$. 
\end{cor}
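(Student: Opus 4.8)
The plan is to deduce this directly from Theorem~\ref{schinzelz} by checking that both $\mathbb{Q}$ and $\mathbb{Z}$ satisfy its hypotheses; here and below $d \geq 3$, as in Theorem~\ref{schinzelz}.

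For $\mathbb{Q}$ I would simply invoke the second assertion of Theorem~\ref{schinzelz}: $\mathbb{Q}$ is a field, so for every $d \geq 3$ there are no $(d-1)$-superirreducibles over $\mathbb{Q}$ of degree $d$. (One could equally use the first assertion, since $\mathbb{Q}$ has characteristic $0$ and $0 \nmid (d-1)$ because $d - 1 \geq 2$.) For $\mathbb{Z}$ I would invoke the first assertion: $\mathbb{Z}$ is an integral domain of characteristic $0$, and $d \geq 3$ forces $d - 1 \geq 2 \neq 0$, so the characteristic does not divide $d - 1$; hence there are no $(d-1)$-superirreducibles over $\mathbb{Z}$ of degree $d$ either.

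The only point requiring care — and the reason the $\mathbb{Z}$ statement is not a formal consequence of the $\mathbb{Q}$ statement — is that the two notions of superirreducibility are incomparable: $\mathbb{Z}$-superirreducibility constrains only integer-coefficient substitutions, while a composition $f(g(x))$ with $g \in \mathbb{Z}[x]$ can fail to be primitive (for instance $(5x+2)^2 + 1 = 5(5x^2 + 4x + 1)$ is irreducible over $\mathbb{Q}$ but not over $\mathbb{Z}$), so $\mathbb{Q}$-superirreducibility does not imply $\mathbb{Z}$-superirreducibility. This is exactly why Theorem~\ref{schinzelz} is formulated over an arbitrary domain, and so there is no residual obstacle in the corollary itself: all of the content lives in Theorem~\ref{schinzelz}, and the corollary is just its specialization to $R = \mathbb{Q}$ and $R = \mathbb{Z}$.
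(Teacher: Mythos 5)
Your proof is correct and matches the paper's: the corollary is obtained exactly by specializing Theorem~\ref{schinzelz} to the field $\mathbb{Q}$ and to the characteristic-zero domain $\mathbb{Z}$, and the paper likewise records it as a direct consequence of that theorem. One small caveat: your aside that the two notions are ``incomparable'' conflicts with the paper's stated convention, under which $2$-superirreducibility over $\mathbb{Q}$ \emph{does} imply it over $\mathbb{Z}$ (reducibility being measured by non-constant factors, so Gauss's lemma applies); but your essential point --- that the $\mathbb{Z}$ case cannot be deduced from the $\mathbb{Q}$ case and needs the domain version of the theorem --- still stands, because that implication runs in the wrong direction for such a deduction.
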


\par Corollary \ref{deg3none} tells us that in particular, there are no $2$-superirreducible polynomials of degree $3$ over $\mathbb{Q}$ or over $\mathbb{Z}$. Our focus in this paper will be primarily on the property of $2$-superirreducibility over $\mathbb{Q}$ and the corresponding slightly weaker property over $\mathbb{Z}$. In particular, we show that there are no $2$-superirreducible polynomials of degree smaller than $4$ over any domain $I$. As we demonstrate in Section \ref{sectiondegs1,2}, it is straightforward to show that there are no $2$-superirreducibles of degree $1$ or $2$ over $\mathbb{Z}$. For polynomials of degree $3$ or more, superirreducibility becomes a more complex question.

\par It is clear that if $f(x)$ is $2$-superirreducible over the fraction field $F$ of a domain $R$, then it must also be $2$-superirreducible over $R$. In general, going in the other direction, it is more difficult
to prove lack of 2-superirreducibility over $R$ than over $F$. For example, it was shown on page $13$ in \cite{BFMW} that 
$f(x)=x^4+x^2+2x+3$ is not $2$-superirreducible over the rationals since $g(x)=-\frac{1}{2}(x^2+x+3)$
causes $f(g(x))$ to factor over $\mathbb{Q}$. However, the authors confirmed that $f(g(x))$ is irreducible over the integers for any $g(x)=ax^2+bx+c \in \mathbb{Z}[x]$ with $|a|, |b|, |c|$ at most $1000$, providing some evidence in favour of $f(x)$ being $2$-superirreducible over $\mathbb{Z}$.

\par Next, in section \ref{sectionquartics} we exhibit families of $2$-superirreducible quartic polynomials over $\mathbb{Q}$.

\begin{thm} \label{x^4+a^4} \hfill
\begin{enumerate} [label=(\alph*)] \item  The polynomials $x^4+1$ and $x^4+2$ are $2$-superirreducible over the rationals. 
\item All the irreducible polynomials in the following sets are $2$-superirreducible over the rationals:
\begin{enumerate} [label=(\roman*)]
\item $\{x^4+p\}$ for $p$ prime and $p\equiv 7, 11 \pmod{16}$
\item $\{x^4+2p\}$ for $p$ prime and $p\equiv \pm 3 \pmod{8}$
\item $\{x^4+4p\}$ and $\{x^4-p\}$ for $p$ prime and $p\equiv \pm 3, -5 \pmod{16}$.
\end{enumerate}
\end{enumerate}
\end{thm}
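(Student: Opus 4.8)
The plan is to reduce the $2$-superirreducibility of $f(x)=x^{4}+a$ to a statement about which elements of $K:=\mathbb{Q}(\mu)$ are squares, where $\mu$ is a root of $f$, and then to settle that statement by an infinite descent controlled by the congruence conditions on $a$. For $g$ of degree $1$ the substitution $x\mapsto g(x)$ is an invertible affine map, so $f(g(x))$ is irreducible iff $f$ is; hence only $\deg g=2$ matters. Writing $g(x)=px^{2}+qx+r$ with $p\neq 0$ and completing the square in $x$ --- an affine substitution that does not affect irreducibility of $f\circ g$ --- one may take $g(x)=px^{2}+r$, so that $f(g(x))=h(px^{2}+r)$ with $h=f$. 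By the usual tower criterion for compositions (applicable because $f$ is irreducible over $\mathbb{Q}$), $f(g(x))$ is irreducible over $\mathbb{Q}$ exactly when the quadratic $px^{2}+(r-\mu)$ is irreducible over $K$, i.e.\ when its discriminant $4p\mu-4pr$ is not a square in $K$. As $(p,r)$ ranges over $\mathbb{Q}^{\times}\times\mathbb{Q}$, this discriminant --- up to the rational square $4$ --- ranges over all $c+d\mu$ with $c\in\mathbb{Q}$ and $d\in\mathbb{Q}^{\times}$. Since every polynomial in the listed families is irreducible, the theorem is equivalent to:
\[
c+d\mu\notin K^{\times 2}\qquad\text{for all }c\in\mathbb{Q}\text{ and all }d\in\mathbb{Q}^{\times}.
\]

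To extract arithmetic from this, I would use that the conjugates of $\mu$ are $\pm\mu,\pm i\mu$, so $N_{K/\mathbb{Q}}(c+d\mu)=c^{4}+ad^{4}$; thus if $c+d\mu$ is a square in $K$, then $c^{4}+ad^{4}$ is a rational square. After scaling by a rational square one may assume $c,d$ are coprime integers. The case $c=0$ is immediate: $-d^{2}\mu^{2}=N_{K/\mathbb{Q}(\sqrt{-a})}(d\mu)$ would then be a square in $\mathbb{Q}(\sqrt{-a})$, but a square root of $-\mu^{2}$ is a root of $x^{4}+a$ and so has degree $4$, not $2$, over $\mathbb{Q}$. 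For the families involving a prime $p$, the valuation at the prime above $p$ (totally ramified, with $\mu$ a uniformizer) forces $p\nmid c$. It then remains to show that $c^{4}+ad^{4}=w^{2}$ has no solution in coprime integers with $cd\neq 0$ (and, when $a<0$, with $c^{4}>|a|d^{4}$); for every $a$ occurring here this necessary condition already does the job.

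The descent is the core. Factoring $(w-c^{2})(w+c^{2})=ad^{4}$, the two factors have greatest common divisor dividing $2$; after splitting off the exact power of $2$ and distributing the prime $p$ (and, for $a\in\{2,2p,4p\}$, a bounded power of $2$) between the two now-coprime parts --- each forced to be a fourth power --- one arrives at an equation of the shape $\pm c^{2}=2^{j}u^{4}\pm a'v^{4}$ with $u,v$ odd. Reducing modulo $16$ and invoking the congruence on $p$ (say $p\equiv 7,11\pmod{16}$ in the family $\{x^{4}+p\}$) kills every branch with $d$ odd; in each branch with $d$ even one factors and descends one more level, reaching a strictly smaller solution of the same shape, which is then impossible modulo $16$. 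For $a=1$ this is Fermat's classical descent for $x^{4}+y^{4}=z^{2}$, and for $a=2$ an entirely analogous two-step descent.

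The hard part is precisely this last step. The $2$-adic bookkeeping in $(w-c^{2})(w+c^{2})=ad^{4}$ is delicate: one must pin down the power of $2$, enumerate the finitely many ways in which $p$ and the relevant powers of $2$ can be distributed over the two coprime-up-to-$2$ factors, and verify in every branch that the resulting equation either contradicts the congruence on $p$ modulo $16$ or descends further. The congruence classes chosen in the statement are exactly those making every branch close; this is where the precise hypotheses on $p$ enter. A routine preliminary is to confirm that each polynomial in the listed sets is irreducible over $\mathbb{Q}$ (via Eisenstein or a short direct check), so that the tower criterion applies in the first place.
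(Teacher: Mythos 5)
Your reduction is in substance the same as the paper's: both arguments show that $f(g(x))$ can only factor if some $c+d\mu$ (with $\mu$ a root of $x^4+a$ and $d\neq 0$) is a square in $\mathbb{Q}(\mu)$, take the norm to land on the Diophantine equation $c^{4}+ad^{4}=z^{2}$, and then invoke the non-existence of nontrivial solutions; the paper reaches that equation by writing $g(\beta)=\alpha$ for $g=bx^2+cx+d$ and squaring $2b\beta+c$, which is exactly your discriminant computation without first normalising $g$. There are two genuine differences. First, the degenerate case (your $c=0$, the paper's $n=0$, i.e.\ $g$ with a double root): the paper handles it by reducing to irreducibility of $X^{8}+R^{4}$ via Karpilovsky's criterion, whereas you observe that a square root of $d\mu$ would force $i\mu$, an element of degree $4$, into the quadratic subfield $\mathbb{Q}(\sqrt{-a})$; your version is valid and arguably cleaner. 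Second, the Diophantine input: the paper proves only the $a=2$ descent in full (Lemma \ref{x^4+2y^4=z^2}), citing Hardy--Wright for $a=1$ and page 23 of Mordell for every family in part (b), while you propose to carry out all the descents yourself but only sketch them. You correctly flag this as the hard part, and indeed the entire arithmetic content of part (b) --- the branch enumeration in $(w-c^{2})(w+c^{2})=ad^{4}$ and the check that each branch dies modulo $16$ for precisely the stated congruence classes of $p$ --- is left unverified in your write-up. So your argument is complete for part (a) modulo the two classical descents, but for part (b) it replaces a citation with a promissory note; you should either execute those descents in detail or cite Mordell as the paper does.
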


\par A special case of Theorem \ref{x^4+a^4} part $(a)$ appeared as \cite[Question 11283]{abbot}. 

\begin{rmk}
For any $a\in \mathbb{Q}^{\times}$ we have that a polynomial $f$ is $2$-superirreducible if and only if $a^{\text{deg}f}f(x/a)$ is. Therefore part $(a)$ of Theorem \ref{x^4+a^4} implies $x^4+a^4$ and $x^4+2a^4$ are $2$-superirreducible for any $a\in \mathbb{Q}^{\times}$. Similar statements are true for part $(b)$.
\end{rmk}
\noindent In addition, we are able to prove later in the section that a larger family of quartics of the shape $ax^4+b$ is $2$-superirreducible (see Theorem \ref{genquartics}). We also prove that there are irreducible families of quartic polynomials which are not $2$-superirreducible.

\par Some examples of $2$-superirreducible polynomials of degree $6$ over $\mathbb{Q}$ may be found in Section $6$ of \cite{BFMW}, although they were not labelled as such. It seems quite likely that similar ideas would exhibit $2$-superirreducible polynomials of even degree $d\geq 6$ over $\mathbb{Q}$. However, new ideas are required to handle polynomials of odd degree. There are no known examples of $2$-superirreducible polynomials of odd degree. Therefore we devote Section \ref{w2superirred} to a discussion of a family of odd degree polynomials satisfying a slightly weaker notion of $2$-superirreducibility over $\mathbb{Z}$. 

\begin{thm} \label{deg5v1}
Let $k\geq 2$ and $f_k(x)=x^{2k+1}+2x+1$. Then whenever $a, b \in \mathbb{Z}$ with $a\neq 0$, the polynomial $f_k(ax^2+b)$ is irreducible over $\mathbb{Z}$.
\end{thm}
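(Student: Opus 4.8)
The plan is to reduce, via Gauss's lemma and Capelli's criterion, to a statement about square classes in the number field cut out by $f_k$, and then to settle that statement by a local analysis, principally at the prime $2$.

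\textbf{Step 1 (irreducibility of $f_k$).} One first needs that $f_k(x)=x^{2k+1}+2x+1$ is irreducible over $\mathbb Q$; otherwise $f_k(ax^2+b)$ would factor for every $a,b$. This is not a single reduction --- $f_k$ is reducible modulo small primes for some $k$ (e.g.\ $x^3-x-1\mid f_4\bmod 3$) --- and I would obtain it by a Ljunggren/Selmer-type argument on the location of the roots: for a putative factorization $f_k=gh$ in $\mathbb Z[x]$ with $g,h$ monic of positive degree, $f_k(0)=1$ forces $|g(0)|=|h(0)|=1$, and evaluating at $-\tfrac12$ and using $\beta^{2k+1}=-(2\beta+1)$ at the roots gives $|g(-\tfrac12)|=2^{-\deg g}$ and likewise for $h$; since $f_k$ is strictly increasing it has a single real root (exponentially close to $-\tfrac12$), while its other $2k$ roots lie in a thin annulus about the unit circle (from $|\beta|^{2k+1}=|2\beta+1|$), and these facts, fed back into the identities for $g(0),g(-\tfrac12)$ and their analogues for $h$, are incompatible.

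\textbf{Step 2 (reduction to a square class).} By Gauss's lemma it then suffices to prove irreducibility over $\mathbb Q$. Writing $f_k(ax^2+b)=h(x^2)$ with $h(y)=f_k(ay+b)$ irreducible of odd degree $2k+1$, Capelli's theorem gives: $f_k(ax^2+b)$ is irreducible over $\mathbb Q$ iff $x^2-(\alpha-b)/a$ is irreducible over $K:=\mathbb Q(\alpha)$, $\alpha$ a root of $f_k$, i.e.\ iff $(\alpha-b)/a\notin K^{\times2}$. Since $a\in\mathbb Q^\times$ and $[K:\mathbb Q]=2k+1$ is odd, $N_{K/\mathbb Q}$ splits the inclusion $\mathbb Q^\times/\mathbb Q^{\times2}\hookrightarrow K^\times/K^{\times2}$; hence some $a\in\mathbb Z\smallsetminus\{0\}$ makes $(\alpha-b)/a$ a square in $K$ precisely when $(\alpha-b)\,N_{K/\mathbb Q}(\alpha-b)=-(\alpha-b)f_k(b)$ is a square in $K$, i.e.\ (with $\alpha=\alpha_1,\dots,\alpha_{2k+1}$ the roots, and using $-(\alpha-b)f_k(b)=(b-\alpha)^2 q(b)$) precisely when $q(b):=\prod_{i\ge2}(\alpha_i-b)=f_k(b)/(b-\alpha)$ is a square in $K$. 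So the theorem reduces to: $q(b)\notin K^{\times2}$ for every $b\in\mathbb Z$.

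\textbf{Step 3 (killing the square class), and the main obstacle.} Here $2$ is unramified in $K$ (since $f_k\bmod 2=(x+1)\psi(x)$ is separable), so $2\mathcal O_K=\mathfrak p\prod_j\mathfrak q_j$ with $\mathfrak p$ of residue degree $1$, $K_{\mathfrak p}=\mathbb Q_2$, and a short Hensel computation gives the corresponding root $\alpha^{(\mathfrak p)}\equiv 5\pmod 8$ in $\mathbb Z_2$. The basic mechanism is: if some prime $\ell\nmid\operatorname{disc}(f_k)$ has $v_\ell(f_k(b))$ odd, then since $f_k\bmod\ell$ is separable with $x-b$ a simple factor whose cofactor has degree $2k\ge1$, there is a prime $\mathfrak l\mid\ell$ of $K$ with $v_{\mathfrak l}(b-\alpha)=0$, so $v_{\mathfrak l}(q(b))=v_\ell(f_k(b))$ is odd and $q(b)\notin K^{\times2}$; this applies with $\ell=2$ whenever $v_2(f_k(b))$ is odd, and for $b$ odd one has the clean identity $v_2(f_k(b))=v_{\mathfrak p}(\alpha-b)$ (a divided-difference computation, using that $f_k'(\alpha^{(\mathfrak p)})$ is a $2$-adic unit). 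The only remaining $b$ are those with $f_k(b)=\pm s\,m^2$ for an odd divisor $s$ of $\operatorname{rad}(\operatorname{disc}f_k)$; for each fixed $s$ the equation $f_k(b)=\pm s\,m^2$ defines a hyperelliptic curve of genus $k\ge2$, so only finitely many $b$ survive, and these are dispatched one at a time by checking that $-(\alpha^{(\mathfrak p)}-b)f_k(b)$ is a non-square in $\mathbb Q_2$ using $\alpha^{(\mathfrak p)}\bmod 8$ (e.g.\ $b=0$ gives $-\alpha^{(\mathfrak p)}\equiv 3\pmod 8$, a non-square). The main obstacle is exactly this last point: making the $2$-adic fallback cover every exceptional $b$ uniformly --- no $b$ may have $f_k(b)$ essentially a perfect square while $-(\alpha-b)f_k(b)$ is simultaneously a square at $\mathfrak p$ --- which may force carrying the Hensel approximation of $\alpha^{(\mathfrak p)}$ past the congruence mod $8$ used here, or bringing in a ramified auxiliary prime; one must also arrange the root estimates of Step 1 to hold for all $k\ge2$ at once.
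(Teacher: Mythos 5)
Your Steps 1--2 are sound and match the paper's skeleton: the paper also reduces (via its Lemma \ref{fact}, a Capelli-type statement) to showing that $at^2+b=\alpha$ has no solution in $\mathbb{Q}(\alpha)$, and your norm trick correctly repackages this as ``$q(b)=f_k(b)/(b-\alpha)\notin K^{\times 2}$ for all $b$.'' (Your Step 1 is itself only a sketch --- the paper simply invokes Perron's criterion for irreducibility of $f_k$ --- but that is a side issue.)

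The genuine gap is the one you flag yourself in Step 3, and it is fatal as written. Your valuation mechanism at primes $\ell\nmid\operatorname{disc}(f_k)$ only succeeds when some such $\ell$ has $v_\ell(f_k(b))$ odd; the leftover $b$ with $f_k(b)=\pm s m^2$ are finite in number for each $k$ by Siegel/Faltings, but they are not effectively enumerable in a way that lets you ``dispatch them one at a time,'' and a proof must cover all of them for every $k\ge 2$ simultaneously. Worse, the proposed fallback --- testing whether $-(\alpha^{(\mathfrak p)}-b)f_k(b)$ is a square in $\mathbb{Q}_2$ at the degree-one prime $\mathfrak p\mid 2$ using $\alpha^{(\mathfrak p)}\equiv 5\pmod 8$ --- provably fails on whole residue classes: for $b\equiv 2\pmod 8$ and $k\ge 2$ one gets $f_k(b)\equiv 5$ and $\alpha^{(\mathfrak p)}-b\equiv 3\pmod 8$, so $-(\alpha^{(\mathfrak p)}-b)f_k(b)\equiv 1\pmod 8$ is a $2$-adic square, and the test at $\mathfrak p$ detects nothing. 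So the argument is not merely unfinished at a few stray points; the local apparatus you set up cannot close it. The paper avoids all of this by exploiting the oddness of $\operatorname{disc}(f_k)$ differently: any solution of $a\beta^2=\theta-b$ gives $\beta_0=a\beta$ with $\beta_0^2=M\theta+N$, $M\mid N$, and $\beta_0$ expressible in the power basis $1,\theta,\dots,\theta^{2k}$ with a fixed odd denominator. Clearing that denominator and reducing the coefficient identities modulo $2$ (using $\theta^{2k+j}\equiv-\theta^{j-1}$) and then modulo $4$ on the $\theta^2$-coefficient forces every coefficient of the square root to be even, contradicting primitivity --- a single congruence descent that is uniform in $a$ and $b$ and uses all the primes above $2$ at once rather than only $\mathfrak p$. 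To salvage your route you would need an analogous statement at the higher-degree primes above $2$ (equivalently in $\mathcal{O}_K/4$), at which point you have essentially reconstructed the paper's computation.
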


\par Superirreducibility over the rationals has been studied in the past, although not by name. For example in his paper \cite{schinzel}, Schinzel showed that a degree $d$ polynomial over $\mathbb{Q}$ cannot be $(d-1)$- superirreducible. More recently, Bober, Fretwell, Martin, and Wooley \cite{BFMW} considered superirreducibility as a potential limitation on (poly)smoothness of polynomial compositions. In addition, the author along with Bober, Fretwell, Kopp and Wooley counted the number of 2-superirreducible polynomials over finite fields and came up with asymptotic formulas in \cite{BDFKW}. One asymptotic formula was obtained by letting the degrees of the polynomials considered tend to infinity. Another was obtained while considering a sequence of finite fields having orders tending to infinity.
\\

\noindent \textbf{Acknowledgements:} This work has appeared as part of my PhD. thesis \cite{thesis} from the University of Michigan mathematics department, where I was partly supported by NSF grant DMS-1701577. I thank Trevor Wooley and Jeffrey Lagarias for helpful comments. I also thank the anonymous referee from a previous version of this paper for their useful and detailed comments.

\section{Lack of $2$-superirreducibility in degrees $1$ and $2$}\label{sectiondegs1,2}

\par It is trivial to see that a linear polynomial cannot be $2$-superirreducible over the rationals, nor over the integers. Concretely if we consider the problem over $\mathbb{Q}$ with $f(x)=ax+b$ and we then take $g(x)=x^2-\frac{b}{a}$ then $f(g(x))=ax^2$ is clearly reducible. Over $\mathbb{Z}$, showing that every degree $1$ polynomial has a quadratic substitution which makes it reducible is only a little more challenging. We find that if $f(x)=ax+b$ with $a,b$ integers and we take $g(x)=ax^2+(b+1)x$ then $f(g(x))=a^2x^2+a(b+1)x+b$ factors over $\mathbb{Z}$ as $(ax+b)(ax+1)$.

\par Increasing the degree by $1$, we next note that any quadratic polynomial $f(x)$ has an obstruction to being $2$-superirreducible. To be specific, we can extrapolate from the case when $f(x)$ was linear: if we take $g(x)=f(x)+x$ then we find that $f(g(x))$ is divisible by $f(x)$. We can generalise this observation in the following proposition:

\begin{prop} \label{degs1,2}
Let $R$ be a domain. If $f(x)$ is a degree $n$ polynomial with coefficients in $R$, then for any integer $k\geq 0$ we have that $f$ is not $(n+k)$-superirreducible over $R$.
\end{prop}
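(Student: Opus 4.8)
The plan is to generalize the quadratic observation just recalled: for a polynomial $f$ of degree $n\ge 2$, exhibit one non-constant substitution $g$ with $\deg g = n \le n+k$ (valid simultaneously for every $k\ge 0$) such that $f(g(x))$ factors over $R$ into two non-units. The natural choice is $g(x)=f(x)+x$. First I would record that $g$ is admissible: since $n\ge 2$, adding $x$ leaves the leading term of $f$ untouched, so $\deg g = n$; hence $g$ is non-constant and $\deg g = n \le n+k$ for all $k\ge 0$, so $g$ is a legitimate substitution in the definition of $(n+k)$-superirreducibility. The case $n=1$ is already handled by the explicit factorization $f(g(x))=(ax+b)(ax+1)$ with $g(x)=ax^2+(b+1)x$ given at the start of this section, and the case $n=0$ is vacuous, so we may assume $n\ge 2$.

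The key algebraic input is that $f(x)$ divides $f(g(x))$ in $R[x]$. This follows from the formal identity $f(Y)-f(x)=(Y-x)\,q(Y,x)$ in $R[Y,x]$, which holds for any $f$ over any commutative ring (expand $Y^j-x^j=(Y-x)(Y^{j-1}+Y^{j-2}x+\cdots+x^{j-1})$ term by term and sum). Substituting $Y=g(x)=f(x)+x$ turns $Y-x$ into $f(x)$, so $f(g(x))=f(x)+f(x)\,q(g(x),x)=f(x)\bigl(1+q(g(x),x)\bigr)$, with $1+q(g(x),x)\in R[x]$. Because this identity is already integral, there is no need to pass to the fraction field of $R$: the divisibility takes place in $R[x]$ itself, which is exactly what one wants over a general domain.

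Finally I would check that this exhibits $f(g(x))$ as a product of two non-units in $R[x]$. We have $\deg f(g(x))=n^2$ while $\deg f = n$, so the cofactor $1+q(g(x),x)$ has degree $n^2-n\ge 2$. Over a domain every polynomial of positive degree is a non-unit, so both $f(x)$ and $1+q(g(x),x)$ are non-units; hence $f(g(x))$ is reducible. Therefore $f$ is not $(n+k)$-superirreducible for any $k\ge 0$.

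There is no deep obstacle here; the argument is short. The only points needing a little care are (i) checking that $g$ stays non-constant and of degree at most $n+k$, which is why one wants $n\ge 2$ and why picking $g$ of degree exactly $n$ (rather than something larger) is convenient, since the same $g$ then works for every $k\ge 0$ at once; and (ii) drawing the conclusion of reducibility rather than mere divisibility, i.e. verifying the cofactor is a non-unit, which is precisely the degree inequality $n^2>n$.
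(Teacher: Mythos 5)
Your argument is essentially the paper's: the paper takes $g(x)=x^kf(x)+x$ and observes $f(x)\mid f(g(x))$, while you take the multiplier to be $1$ instead of $x^k$, which is equally admissible since the definition only requires $\deg g\leq n+k$. Your write-up is in fact a bit more careful than the paper's, which stops at ``divisible by $f(x)$ and so reducible'' without verifying that the cofactor is a non-unit; your degree count $n^2-n\geq 2$ supplies exactly that missing check.

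The one genuine wrinkle is the case $n=1$, $k=0$: there $(n+k)$-superirreducibility only permits substitutions $g$ of degree at most $1$, so the quadratic $g(x)=ax^2+(b+1)x$ you invoke from the start of the section is not an admissible substitution, and your main construction $g=f+x$ also degenerates (it can be constant, and over a field the cofactor is a unit). This corner case is equally problematic for the paper's own proof --- indeed over $R=\mathbb{Q}$ a linear polynomial composed with any non-constant linear $g$ stays linear and hence irreducible, so the statement itself needs either the restriction $n+k\geq 2$ or a separate content-based argument over $\mathbb{Z}$. For all $n\geq 2$ (and for $n=1$ with $k\geq 1$) your proof is complete and correct.
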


\begin{proof}
We can let $g(x)=x^kf(x)+x$. Then we have that $f(g(x))\equiv 0 \pmod {f(x))} $ so that $f(g(x))$ is divisible by $f(x)$ and so reducible.
\end{proof}

\par Explained from another angle, this proposition tells us that for any polynomial $f$ and any $n\geq \deg(f)$, we can find at least one polynomial $g(x)$ of degree $n$ such that the composition $f(g(x))$ is reducible. We note that for any given polynomial $f(x)$ there are many possible choices for $g(x)$ which we can use to prove Proposition \ref{degs1,2}. For example, instead of letting $g(x)=x^kf(x)+x$, we can replace $x^k$ by any polynomial of degree $k$.

\section{Lack of $2$-superirreducibles in degree $3$} \label{sectionschinzel}

\par In this section, we prove Theorem \ref{schinzelz}. This theorem is a slightly specialised version of a result of Schinzel's, modified from Schinzel's original argument. While Schinzel gives his result over $\mathbb{Q}$ and $\mathbb{Z}$, we will prove it for all unique factorization domains and their corresponding fields of fractions. It will then follow directly from our theorem that there can be no $2$-superirreducibles of degree $3$ over $\mathbb{Q}$ or over $\mathbb{Z}$. 

\begin{thm} \label{schinzelq}
For every $d\geq 3$ and any field $F$, there are no $(d-1)$-superirreducibles over $F$ of degree $d$.
\end{thm}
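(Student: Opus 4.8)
The plan is to find, for any degree-$d$ polynomial $f$ over $F$ with $d \geq 3$, an explicit substitution $g$ of degree $d-1$ making $f(g(x))$ reducible. The natural idea, going back to Schinzel, is to choose $g$ so that $f(g(x))$ acquires a rational root, or more precisely so that $f \circ g$ is divisible by a nonconstant polynomial of controlled degree. First I would normalize: since reducibility of $f(g(x))$ is unaffected by replacing $f(x)$ by $f(x+c)$ and by scaling, I may assume $f$ is monic and, after a shift in the variable, that the coefficient of $x^{d-1}$ in $f$ vanishes, i.e. $f(x) = x^d + a_{d-2}x^{d-2} + \cdots + a_0$. The goal is then to produce $g$ of degree exactly $d-1$ with $f(g(\alpha)) = 0$ for some $\alpha \in F$, or better, with $x - \alpha \mid f(g(x))$ forced by a dimension count.

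The key step is a parameter count. A polynomial $g$ of degree $d-1$ has $d$ coefficients, hence $d$ free parameters over $F$. Imposing that $f(g(x))$ — a polynomial of degree $d(d-1)$ — be divisible by a fixed linear polynomial $x$, say, is a single polynomial condition $f(g(0)) = 0$ on the constant term of $g$, which we can solve as long as $f$ takes the value $0$ somewhere; but $f$ need not have a root in $F$. So instead I would aim to split off a quadratic (or higher) factor: ask that $f(g(x)) \equiv 0 \pmod{h(x)}$ for a chosen monic quadratic $h$. Writing $g(x) = q(x)h(x) + r(x)$ with $\deg r \leq 1$, we have $f(g(x)) \equiv f(r(x)) \pmod{h(x)}$, so it suffices to choose the linear polynomial $r(x) = ux + v$ and the quadratic $h$ so that $h \mid f(ux+v)$. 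Now $f(ux+v)$ has degree $d \geq 3$ over $F$; I want it to have a monic quadratic factor $h$ over $F$ while $q = (g - r)/h$ is allowed to range over all polynomials of degree $d - 3$, giving $d - 2$ further free parameters. The cleanest route: pick any $u, v \in F$ with $u \neq 0$ so that $f(ux + v)$ has degree $d$; if it factors over $F$ we are essentially done after adjusting; if it is irreducible of degree $d \geq 3$, we instead want to engineer $f(g(x))$ to have a linear factor directly. Here I would exploit that we still have the constant term of $g$ entirely free after fixing its top $d-1$ coefficients: letting $g(x) = x^{d-1} + (\text{lower terms}) + t$ with $t$ a free parameter, $f(g(x))$ is reducible the moment $g(x_0) $ equals a root of $f$ — but again $f$ may have no $F$-root.

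The honest resolution, and what I expect the paper does, is the Schinzel trick of forcing $f \circ g$ to be a \emph{norm form that visibly factors}: choose $g$ so that $f(g(x)) = \phi(x)\,\psi(x)$ where $\phi, \psi$ are the two "halves" of a near-palindromic or otherwise structured identity. Concretely, for $d \geq 3$ one can try $g(x) = $ a degree-$(d-1)$ polynomial making $f(g(x)) = f(x^{d-1} + \cdots)$ match $N_{K/F}$ of a linear polynomial for a fixed degree-$d$ extension $K = F[x]/(f)$; since $g$ has $d$ parameters and $N_{K/F}(\alpha x - \beta)$ as $\alpha, \beta$ vary through $K$ has $2d$ parameters but only needs to be hit up to the $d(d-1)$-dimensional target in the right way, a counting argument gives a solution with $\deg g = d-1$ precisely when $d - 1 \geq 2$. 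The main obstacle is exactly this dimension bookkeeping: verifying that the system of equations expressing "$f(g(x))$ has a linear factor over a quadratic extension, equivalently factors over $F$" has an $F$-rational solution with $g$ of degree \emph{exactly} $d-1$ (not lower), and that the resulting factorization is genuinely nontrivial (neither factor a unit). Handling the low-degree edge case $d = 3$ — where $g$ is quadratic and $f \circ g$ has degree $6$, and one wants a cubic$\times$cubic or quadratic$\times$quartic split — will need a slightly separate, hands-on argument, likely by writing $f(g(x))$ explicitly and matching coefficients against $(x^3 + Ax^2 + Bx + C)(x^3 + \cdots)$.
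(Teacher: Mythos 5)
Your proposal does not reach a proof: every concrete construction you try (forcing a linear factor, or writing $g = qh + r$ with $h$ quadratic and asking $h \mid f(r)$) is one you yourself correctly identify as failing when $f$ is irreducible without a root in $F$, and the final ``norm form'' idea is left as a parameter count that you explicitly flag as unverified. Over a general field $F$ --- which need not be algebraically closed, and could be finite --- counting parameters against equations does not by itself produce $F$-rational solutions of the resulting system, so the ``dimension bookkeeping'' is not a detail you can defer; it is the entire difficulty. No explicit $g$ of degree $d-1$ is ever produced, and no factorization of $f(g(x))$ is ever exhibited, so as written there is a genuine gap.

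The missing idea is in fact a specific instance of your own reduce-modulo-$h$ strategy, but with $h$ of degree $d$ rather than $2$, and with $g$ congruent to $x^{-1}$ rather than to a linear remainder. Normalize $f$ to be monic, set $h(x) = x^d f(1/x)$ (the reciprocal polynomial, which has degree $d$ and constant term $1$), and put $g(x) = \bigl(1 - h(x)\bigr)/x$, a polynomial of degree $d-1$ since $h(0)=1$. Then $x\,g(x) = 1 - h(x) \equiv 1 \pmod{h(x)}$, so $g$ represents the inverse of $x$ in $F[x]/(h)$, whence $x^d f(g(x)) \equiv x^d f(1/x) = h(x) \equiv 0 \pmod{h(x)}$; as $x$ is invertible modulo $h$, this forces $h(x) \mid f(g(x))$. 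Since $\deg f(g) = d(d-1) > \deg h = d$ when $d \geq 3$, the factorization is nontrivial, and no separate treatment of $d=3$ is needed. This explicit construction is exactly what the paper does.
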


\begin{proof} 
Let $f(x)$ be an irreducible polynomial of degree $d$ with coefficients in $F$. Since we are working over a field, we can assume that $f$ is monic. We let $h(x)=x^df\left(\frac{1}{x}\right)$ and $g(x)=-\frac{h(x)}{x}+\frac{1}{x}$. We note that by adding the $\frac{1}{x}$ term, $g(x)$ is a polynomial. We also notice
\begin{align*}
f(g(x))&=f\left(-\frac{h(x)}{x}+\frac{1}{x}\right) \equiv 0 \pmod{h(x)}.
\end{align*}
This gives a factorisation of $f(g(x))$ over $F$ that has a degree $d-1$ factor.

\end{proof}

\par The proof that there are no $(d-1)$-superirreducibles of degree $d$ over a UFD $R$ of characteristic $0$ is much more involved than what we did to prove Theorem \ref{schinzelq}. Since we can no longer assume that $f$ is monic, if we were to replicate the argument in the proof of Theorem \ref{schinzelq} to construct $g$, we would have to divide by the leading coefficient $a_d$ of $f$. However this causes problems since then we are left with a polynomial that has coefficients in $F$ and not $R$. Therefore we must figure out a way to rescale and shift things to ensure that $g(x)$ is in $R[x]$.

\begin{proof} [Proof of Theorem \ref{schinzelz}]

\par Suppose that $f$ is irreducible over $R$ but not necessarily monic. Let $f(x)=a_dx^d+a_{d-1}x^{d-1}+\dots +a_1x+a_0$. Drawing inspiration from the monic case in the proof of Theorem \ref{schinzelq}, we let $h(x)=x^d f\left(\frac{1}{x}\right)$. Next we define $g(x)$ (our polynomial of degree $d-1$ to be substituted into $f$) as $g(x)=-\frac{h(x)}{a_d x}+\frac{1}{x}$. We note that by adding the $\frac{1}{x}$ term, $g(x)$ is a polynomial. We also notice
\begin{align*}
f(g(x))&=f\left(-\frac{h(x)}{a_d x}+\frac{1}{x}\right) \equiv 0 \pmod{h(x)}.
\end{align*}
This is a factorisation of $f(g(x))$ over $F$ where one of the factors has degree $d-1$.

\par To get a factorisation over $R$, we need to rescale and shift things. We see that the $g(x) \in F[x]$ that we previously chose is not a polynomial in $R[x]$ because of the division by $a_d$. In order to account for this problem, we would like to introduce a scaling constant to help us clear denominators:
$$A:=(d-1)^2a_d^2.$$
We note that $A\neq 0$ because the characteristic of $R$ does not divide $d-1$. If we simply rescale by $A$ and let $\tilde{h}(x)=h(Ax)$ and $\tilde{g}(x)=g(Ax)$, then this clears most denominators except for one: the constant term of $\tilde{g}(x)$ is $-\frac{a_{d-1}}{a_d}$, which is in $F$ and not $R$. To account for this term, we need to first introduce a shift and then rescale by $A$. Our shift necessitates the presence of the $(d-1)^2$ term in A because of a $\frac{d}{d-1}-1$ term that will arise as a result of the binomial theorem. Explicitly our shift is:
$$H(x)=x^df\left(\frac{1}{x}-\frac{a_{d-1}}{(d-1)a_d}\right), \qquad G(x)=-\frac{H(x)}{a_dx}+\frac{1}{x}-\frac{a_{d-1}}{(d-1)a_d}$$
and we take the rescaled versions of these functions to be
$$\tilde{H}(x)=H(Ax), \qquad \tilde{G}(x)=G(Ax).$$
Calculating explicitly, we find that
\begin{equation*}
\tilde{H}(x)=A^dx^d \sum_{i=0}^{d} a_i\left(\frac{1}{Ax}-\frac{a_{d-1}}{(d-1)a_d}\right)^i
\end{equation*}
and 
\begin{align*}
\tilde{G}(x)&=-\frac{\tilde{H}(x)}{a_dAx }+\frac{1}{Ax}-\frac{a_{d-1}}{(d-1)a_d}\\
&=-A^{d-2}(d-1)^2a_dx^{d-1} \sum_{i=0}^{d} a_i\left(\frac{1}{Ax}-\frac{a_{d-1}}{(d-1)a_d}\right)^i+\frac{1}{Ax}-\frac{a_{d-1}}{(d-1)a_d}. 
\end{align*}
We note that the factor $-A^{d-2}(d-1)^2a_dx^{d-1}$ on the outside of the summation clears almost all denominators in the summation. Specifically the only terms whose denominators are not cleared are
$$\frac{a_d}{A^dx^d}-\frac{da_{d-1}}{(d-1)A^{d-1}x^{d-1}}+\frac{a_{d-1}}{A^{d-1}x^{d-1}}.$$
Multiplying by the  $-A^{d-2}(d-1)^2a_dx^{d-1}$ on the outside of the summation, we receive 
$$-\frac{1}{Ax}+\frac{da_{d-1}}{a_d(d-1)}-\frac{a_{d-1}}{a_d}$$
which is exactly cancelled by the terms added on at the end, after the summation. Therefore $\tilde{G}(x)$ is a polynomial in $R[x]$, as desired. Finally we observe that $f(\tilde{G}(x))$ is reducible since
\begin{align*}
f(\tilde{G}(x))&=f\left(-\frac{\tilde{H}(x)}{a_dAx }+\frac{1}{Ax}-\frac{a_{d-1}}{(d-1)a_d}\right)\\
&\equiv f\left(\frac{1}{Ax}-\frac{a_{d-1}}{(d-1)a_d}\right) \pmod{\tilde{H}(x)}\\
& \equiv 0 \pmod{\tilde{H}(x)}
\end{align*}
so that $f(\tilde{G}(x))$ is divisible by $\tilde{H}(x)$ over the field of fractions of $R$. Since $R$ is a UFD, we can use Gauss' Lemma to conclude that $f(\tilde{G}(x))=\tilde{H}(x)k(x)$ for some polynomial $k(x)$ in $F[x]$. Scaling $\tilde{H}(x)$ and $k(x)$ appropriately by some constants $c_1$ and $c_2$, we see that $f(\tilde{G}(x))=(c_1\tilde{H}(x))(c_2k(x))$ for polynomials $c_1\tilde{H}(x)$ and $c_2k(x)$ in $R[x]$. Therefore $f(\tilde{G}(x))$ is reducible over $R$.
\end{proof}

\par We then receive Corollary \ref{deg3none} as a direct consequence of Theorem \ref{schinzelz}. We note that Theorem \ref{schinzelz} leaves open the possibility that over a domain of characteristic dividing $d-1$ there might exist $(d-1)$-superirreducibles of degree $d$. It would be interesting to determine whether or not $2$-superirreducible cubic polynomials exist over a characteristic $2$ domain such as $\mathbb{F}_2[t]$.

\section{An equivalent definition of $2$-superirreducibility}

\par Theorem \ref{degs1,2} cannot be applied to prove $2$-superirreducibility of polynomials of degree greater than $2$, so we must think about $2$-superirreducibility in other ways. In this section, we state a lemma equating $2$-superirreducibility with the non-existence of solutions to a certain equation. This lemma will be used to prove Theorem \ref{eventerms}.

\begin{lem} \label{fact}
Let $f(x) \in \mathbb{Q}[x]$ be a monic irreducible polynomial of degree $d$, and let $\alpha$ be a root of $f(x)$ in the splitting field of $f(x)$ over $\mathbb{Q}$. If $g(t)$ is a quadratic polynomial, the composition $f(g(t))$ factors over $\mathbb{Q}$ if and only if $g(t) = \alpha$ has a solution in $\mathbb{Q}(\alpha)$.
\end{lem}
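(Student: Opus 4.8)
The plan is to recast the reducibility of $f(g(t))$ in terms of the degrees over $\mathbb{Q}$ of the fields generated by its roots, and then to use the transitivity of the Galois group of the splitting field of $f$ on the roots of $f$ to pass between the different conjugates of $\alpha$.

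I would start with the elementary numerology. Since $g$ has degree $2$, the composition $f(g(t))$ has degree $2d$, and its roots (in a fixed algebraic closure of $\mathbb{Q}$) are exactly the elements $\beta$ with $g(\beta)$ a root of $f$. For any such $\beta$, put $\alpha' = g(\beta)$; then $\alpha'$ is a conjugate of $\alpha$, so $[\mathbb{Q}(\alpha'):\mathbb{Q}] = d$, while $\beta$ is a root of the quadratic $g(t) - \alpha' \in \mathbb{Q}(\alpha')[t]$, so $\mathbb{Q}(\alpha') \subseteq \mathbb{Q}(\beta)$ with $[\mathbb{Q}(\beta):\mathbb{Q}(\alpha')] \le 2$. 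Hence $[\mathbb{Q}(\beta):\mathbb{Q}] \in \{d, 2d\}$, and the value $d$ occurs precisely when $\mathbb{Q}(\beta) = \mathbb{Q}(\alpha')$, i.e.\ $\beta \in \mathbb{Q}(\alpha')$.

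For the backward implication, suppose $g(\beta) = \alpha$ with $\beta \in \mathbb{Q}(\alpha)$. Then $\mathbb{Q}(\alpha) \subseteq \mathbb{Q}(\beta) \subseteq \mathbb{Q}(\alpha)$, so the minimal polynomial of $\beta$ over $\mathbb{Q}$ has degree exactly $d$; it divides $f(g(t))$, whose degree $2d$ is strictly larger, so $f(g(t))$ is reducible over $\mathbb{Q}$. For the forward implication, suppose $f(g(t))$ is reducible over $\mathbb{Q}$ and let $p(t) \in \mathbb{Q}[t]$ be an irreducible factor of degree $e$ with $1 \le e < 2d$. A root $\beta$ of $p$ satisfies $[\mathbb{Q}(\beta):\mathbb{Q}] = e < 2d$, so by the numerology above $[\mathbb{Q}(\beta):\mathbb{Q}] = d$ and $\mathbb{Q}(\beta) = \mathbb{Q}(\alpha')$ where $\alpha' = g(\beta)$ is a root of $f$. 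Working inside the splitting field $L$ of $f$, I would then choose $\tau \in \Gal(L/\mathbb{Q})$ with $\tau(\alpha') = \alpha$; since $g$ has rational coefficients, $g(\tau\beta) = \tau(g(\beta)) = \alpha$, and $\tau\beta \in \tau(\mathbb{Q}(\alpha')) = \mathbb{Q}(\alpha)$, so $\tau\beta$ is a solution of $g(t) = \alpha$ in $\mathbb{Q}(\alpha)$.

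The main point requiring care is the degree bookkeeping: one must be sure that a root $\beta$ of a proper irreducible factor of $f(g(t))$ has $[\mathbb{Q}(\beta):\mathbb{Q}]$ equal to $d$ and not some other divisor of $2d$, and this is exactly where it matters that $g$ is quadratic, forcing the tower $\mathbb{Q} \subseteq \mathbb{Q}(g(\beta)) \subseteq \mathbb{Q}(\beta)$ to have a bottom step of degree $d$ and a top step of degree at most $2$. The Galois transport in the last step is routine, but it is what makes the statement insensitive to which root $\alpha$ of $f$ is fixed at the outset.
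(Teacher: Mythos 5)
Your argument is correct, but it is not the route the paper takes: the paper does not prove the lemma from scratch at all, instead deducing it from a cited result of Schinzel (stated as Theorem~\ref{twenty}), which says that a factorisation $g(t)-\alpha=\prod_p h_p(t)^{e_p}$ into irreducibles over $\mathbb{Q}(\alpha)$ pushes forward to the factorisation $f(g(t))=\prod_p N_{\mathbb{Q}(\alpha)/\mathbb{Q}}(h_p(t))^{e_p}$ into irreducibles over $\mathbb{Q}$. From that machine the lemma falls out immediately: $f(g(t))$ is reducible exactly when $g(t)-\alpha$ is reducible over $\mathbb{Q}(\alpha)$, which for a quadratic means having a root there. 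Your proof replaces the norm formalism with elementary degree bookkeeping in the tower $\mathbb{Q}\subseteq\mathbb{Q}(g(\beta))\subseteq\mathbb{Q}(\beta)$, plus transitivity of $\Gal(L/\mathbb{Q})$ on the roots of $f$ to move a solution from $\mathbb{Q}(\alpha')$ to $\mathbb{Q}(\alpha)$; both directions are sound, and the Galois transport step is exactly what is needed to make the conclusion independent of the chosen root (note $\beta\in\mathbb{Q}(\alpha')\subseteq L$, so applying $\tau$ is legitimate). What your approach buys is self-containedness and minimality of prerequisites; what the paper's approach buys is the finer structural information from Schinzel's theorem --- the exact degrees and multiplicities of the irreducible factors of $f(g(t))$ --- which the paper actually uses later (for instance, in the proof of Theorem~\ref{x^4+a^4}, to conclude that both factors of $f(g(t))$ have degree $4$), so the citation is doing more work downstream than the lemma alone requires.
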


This Lemma is a consequence of Theorem $20$ in \cite{schinzel3}. For completeness, we state the theorem here in the language of this paper. 

\begin{thm}\label{twenty}
Let $f(x)\in \mathbb{Q}[x]$ be a monic irreducible polynomial and suppose that $\alpha$ is a root of $f(x)$ in the splitting field of $f(x)$ over $\mathbb{Q}$. Whenever $g(t)-\alpha$ has the factorisation $\prod_{p=1}^{r} h_p(t)^{e_p}$, as a product of distinct monic irreducible polynomials $h_p\in \mathbb{Q}(\alpha)$, with each $e_p\geq 1$, then $f(g(t))$ has the factorisation $\prod_{p=1}^{r} N_{\mathbb{Q}(\alpha)/\mathbb{Q}}\left(h_p(t)\right)^{e_p}$ where each $N_{\mathbb{Q}(\alpha)/\mathbb{Q}}\left(h_p(t)\right)\in \mathbb{Q}[t]$ is monic and irreducible.
\end{thm}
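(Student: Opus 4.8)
The plan is to prove Theorem~\ref{twenty} by exploiting the multiplicativity of the field norm $N_{\mathbb{Q}(\alpha)/\mathbb{Q}}$ together with the classical fact that $f(x)$ is, up to a constant, the characteristic polynomial of multiplication by $\alpha$ on $\mathbb{Q}(\alpha)$; equivalently $f(x) = N_{\mathbb{Q}(\alpha)/\mathbb{Q}}(x - \alpha)$ when $f$ is monic. First I would record the key identity: for any polynomial $g(t) \in \mathbb{Q}[t]$, viewing $g(t) - \alpha$ as an element of $\mathbb{Q}(\alpha)[t]$ and applying the norm coefficientwise (which corresponds to taking the product of the $\Gal$-conjugates, or to the resultant $\operatorname{Res}_x(f(x), g(t) - x)$ up to the leading coefficient of $f$), one gets
\[
N_{\mathbb{Q}(\alpha)/\mathbb{Q}}\bigl(g(t) - \alpha\bigr) = \prod_{\sigma} \bigl(g(t) - \sigma(\alpha)\bigr) = f(g(t)),
\]
where the product runs over the embeddings $\sigma : \mathbb{Q}(\alpha) \to \overline{\mathbb{Q}}$ and the last equality is exactly $f(y) = \prod_\sigma (y - \sigma(\alpha))$ evaluated at $y = g(t)$. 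This reduces the theorem to the statement that the norm of a product is the product of the norms, applied to the factorisation $g(t) - \alpha = \prod_{p=1}^r h_p(t)^{e_p}$.

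Next I would carry out the reduction in two steps. Step one: extend $N_{\mathbb{Q}(\alpha)/\mathbb{Q}}$ from $\mathbb{Q}(\alpha)$ to the polynomial ring $\mathbb{Q}(\alpha)[t]$ in the obvious way (take the product over embeddings fixing $\mathbb{Q}$ of the conjugate polynomials), check it is multiplicative on $\mathbb{Q}(\alpha)[t]$, and note it sends monic polynomials to monic polynomials and lands in $\mathbb{Q}[t]$. Then from $g(t) - \alpha = \prod_p h_p(t)^{e_p}$ we immediately obtain
\[
f(g(t)) = N_{\mathbb{Q}(\alpha)/\mathbb{Q}}\bigl(g(t) - \alpha\bigr) = \prod_{p=1}^r N_{\mathbb{Q}(\alpha)/\mathbb{Q}}\bigl(h_p(t)\bigr)^{e_p}.
\]
Step two, the substantive part: show each $N_{\mathbb{Q}(\alpha)/\mathbb{Q}}(h_p(t))$ is irreducible over $\mathbb{Q}$. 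Here I would argue as follows. Since $h_p$ is monic irreducible over $\mathbb{Q}(\alpha)$ and divides $g(t) - \alpha$, every root $\beta$ of $h_p$ in $\overline{\mathbb{Q}}$ satisfies $g(\beta) = \alpha$, so $\mathbb{Q}(\alpha) \subseteq \mathbb{Q}(\beta)$, and $[\mathbb{Q}(\alpha)(\beta):\mathbb{Q}(\alpha)] = \deg h_p$ forces $[\mathbb{Q}(\beta):\mathbb{Q}] = d \cdot \deg h_p = \deg N_{\mathbb{Q}(\alpha)/\mathbb{Q}}(h_p)$. Since $\beta$ is a root of the degree-$(d\deg h_p)$ polynomial $N_{\mathbb{Q}(\alpha)/\mathbb{Q}}(h_p)(t) \in \mathbb{Q}[t]$ and its minimal polynomial over $\mathbb{Q}$ already has that degree, the minimal polynomial equals $N_{\mathbb{Q}(\alpha)/\mathbb{Q}}(h_p)$, which is therefore irreducible. (One also needs the $h_p$ to be \emph{separable} so that $\beta$ genuinely generates a degree-$\deg h_p$ extension of $\mathbb{Q}(\alpha)$; in characteristic zero this is automatic.)

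The main obstacle is Step two, and more precisely the bookkeeping needed to see that the different irreducible factors $h_p$ produce norms that are actually distinct as elements of $\mathbb{Q}[t]$ — or rather, to be careful about what the theorem claims: it asserts each factor $N_{\mathbb{Q}(\alpha)/\mathbb{Q}}(h_p(t))$ is irreducible, not that they are pairwise distinct, so strictly I only need the irreducibility argument above. A secondary subtlety is justifying that applying the norm coefficientwise to $g(t)-\alpha$ really yields $f(g(t))$ and not $f(g(t))$ times the leading coefficient of $f$: this is where monicity of $f$ is used, and I would spell out the identity $f(y)=\prod_{\sigma}(y-\sigma(\alpha))$ explicitly before substituting $y = g(t)$. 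Beyond these points the argument is formal, so I expect the write-up to be short once the coefficientwise-norm formalism is set up cleanly; since the paper attributes this to \cite{schinzel3}, I would most likely just cite it and give the sketch above rather than a full independent proof.
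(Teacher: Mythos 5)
Your proposal is correct, but there is nothing in the paper to compare it against: the paper does not prove Theorem~\ref{twenty} at all, it simply quotes the statement from Schinzel \cite{schinzel3} (``This Lemma is a consequence of Theorem $20$ in \cite{schinzel3}. For completeness, we state the theorem here\dots''). Your sketch supplies the standard proof that the paper omits, and every step checks out: the coefficientwise norm $N(P)=\prod_\sigma \sigma(P)$ is multiplicative, sends monics to monics, lands in $\mathbb{Q}[t]$ by Galois invariance, and satisfies $N(g(t)-\alpha)=\prod_\sigma(g(t)-\sigma(\alpha))=f(g(t))$ because $f$ is monic irreducible; and for the irreducibility of each $N(h_p)$, the chain $g(\beta)=\alpha \Rightarrow \mathbb{Q}(\alpha)\subseteq\mathbb{Q}(\beta) \Rightarrow \mathbb{Q}(\alpha)(\beta)=\mathbb{Q}(\beta) \Rightarrow [\mathbb{Q}(\beta):\mathbb{Q}]=d\deg h_p=\deg N(h_p)$ forces $N(h_p)$ to be the minimal polynomial of $\beta$. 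You are also right that no distinctness of the norms is claimed or needed (conjugate factors $h_p$ genuinely can have equal norms), and that separability is automatic over $\mathbb{Q}$. The only point I would make explicit in a write-up is the step $\mathbb{Q}(\alpha)(\beta)=\mathbb{Q}(\beta)$, which you use silently when converting $[\mathbb{Q}(\alpha)(\beta):\mathbb{Q}(\alpha)]=\deg h_p$ into a statement about $[\mathbb{Q}(\beta):\mathbb{Q}]$; it follows immediately from $\mathbb{Q}(\alpha)\subseteq\mathbb{Q}(\beta)$, which you have already established. Your closing instinct --- cite \cite{schinzel3} and give the sketch --- matches exactly what the paper does (minus the sketch).
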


We conclude the following corollary as an immediate consequence of this theorem.

\begin{cor}
Let $f(x) \in \mathbb{Q}[x]$ be a monic irreducible polynomial of degree $d$, and let $\alpha$ be a root of $f(x)$ in the splitting field of $f(x)$ over $\mathbb{Q}$.  Let $g(t)\in \mathbb{Q}[t]$ be quadratic. Then the following hold.
\begin{enumerate}
\item If $g(t)-\alpha$ is irreducible over $\mathbb{Q}(\alpha)$, then $f(g(t))$ is irreducible over $\mathbb{Q}$.
\item If $g(t)-\alpha$ splits over $\mathbb{Q}(\alpha)$, then $f(g(t))$ splits over $\mathbb{Q}$.
\end{enumerate}
\end{cor}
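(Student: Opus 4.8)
The plan is to read both statements off directly from Theorem~\ref{twenty}, organised by the possible factorisation types of $g(t)-\alpha$ over $\mathbb{Q}(\alpha)$. Since $g$ is quadratic, $g(t)-\alpha$ has degree $2$, and over $\mathbb{Q}(\alpha)$ I would write
\[
g(t)-\alpha \;=\; c\prod_{p=1}^{r} h_p(t)^{e_p},
\]
where $c\in\mathbb{Q}^{\times}$ is the leading coefficient of $g$, the $h_p\in\mathbb{Q}(\alpha)[t]$ are distinct monic irreducible polynomials, each $e_p\geq 1$, and $\sum_{p}e_p\deg h_p=2$ (so $r\leq 2$, and the only possibilities are an irreducible quadratic, two distinct monic linear factors, or a scalar times a repeated monic linear factor). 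Because $f$ is monic of degree $d$, the norm-form computation behind Theorem~\ref{twenty} goes through with the scalar $c$ simply pulled outside, giving
\[
f(g(t)) \;=\; c^{\,d}\prod_{p=1}^{r} N_{\mathbb{Q}(\alpha)/\mathbb{Q}}\!\bigl(h_p(t)\bigr)^{e_p},
\]
where each $N_{\mathbb{Q}(\alpha)/\mathbb{Q}}(h_p(t))\in\mathbb{Q}[t]$ is monic and irreducible of degree $[\mathbb{Q}(\alpha):\mathbb{Q}]\deg h_p=d\deg h_p$.

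For part~(1), suppose $g(t)-\alpha$ is irreducible over $\mathbb{Q}(\alpha)$; then $r=1$, $e_1=1$, $\deg h_1=2$, and the second display becomes $f(g(t))=c^{\,d}\,N_{\mathbb{Q}(\alpha)/\mathbb{Q}}(h_1(t))$, a nonzero rational multiple of a monic irreducible polynomial of degree $2d=\deg f(g(t))$. Since a nonzero scalar multiple of an irreducible polynomial is irreducible, $f(g(t))$ is irreducible over $\mathbb{Q}$.

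For part~(2), suppose $g(t)-\alpha$ splits over $\mathbb{Q}(\alpha)$. Then either it has two distinct monic linear factors $t-\beta_1,\,t-\beta_2$ with $\beta_1,\beta_2\in\mathbb{Q}(\alpha)$, whence $f(g(t))=c^{\,d}\,N_{\mathbb{Q}(\alpha)/\mathbb{Q}}(t-\beta_1)\,N_{\mathbb{Q}(\alpha)/\mathbb{Q}}(t-\beta_2)$ is a product of two polynomials of degree $d$ over $\mathbb{Q}$; or it is a scalar times the square of one monic linear factor $t-\beta_1$, whence $f(g(t))=c^{\,d}\,N_{\mathbb{Q}(\alpha)/\mathbb{Q}}(t-\beta_1)^2$. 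In either case $f(g(t))$ factors over $\mathbb{Q}$ in a manner mirroring the splitting of $g(t)-\alpha$, so $f(g(t))$ splits (and in particular is reducible) over $\mathbb{Q}$.

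The only point needing any care — and the one I would flag as the minor obstacle — is the bookkeeping with the leading coefficient $c$ of $g$: Theorem~\ref{twenty} is stated for a factorisation of $g(t)-\alpha$ into monic irreducibles (essentially for monic $g$), and one must check that extracting the scalar $c$ contributes only the harmless unit $c^{\,d}$ to $f(g(t))$, affecting neither irreducibility nor the shape of the factorisation over $\mathbb{Q}$. Once that is noted, both parts are an immediate consequence of Theorem~\ref{twenty} together with the degree count $\deg N_{\mathbb{Q}(\alpha)/\mathbb{Q}}(h_p)=d\deg h_p$.
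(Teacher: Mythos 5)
Your proposal is correct and follows exactly the route the paper intends: the paper states this corollary as an immediate consequence of Theorem~\ref{twenty} without further argument, and you have simply filled in the routine case analysis of the factorisation types of $g(t)-\alpha$, the degree count $\deg N_{\mathbb{Q}(\alpha)/\mathbb{Q}}(h_p)=d\deg h_p$, and the harmless leading-coefficient factor $c^{\,d}$. Nothing further is needed.
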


\section{$2$-superirreducible families in degree $4$}\label{sectionquartics}

\par We begin by getting an idea of what a $2$-superirreducible degree $4$ polynomial must look like. This is a fairly straightforward problem over the rationals, but is more challenging over the integers.

\begin{thm} \label{eventerms}
If $f(x)$ is a polynomial over $\mathbb{Q}$ of degree $2N$, its linear term has nonzero coefficient and all of its other odd degree terms have zero coefficients, then $f(x)$ is not $N$-superirreducible.
\end{thm}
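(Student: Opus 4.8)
The plan is to use the special shape of $f$ to produce, explicitly, a non‑constant substitution polynomial $g$ of degree $N$ for which $f(g(t))$ is forced to factor, and then to read off the factorisation from elementary field theory (equivalently, from Theorem~\ref{twenty}). If $f$ is reducible over $\mathbb{Q}$ it is not $N$‑superirreducible and there is nothing to do, so assume $f$ is irreducible of degree $2N$; we may also assume $N\geq 2$ (when $N=1$ only degree‑one substitutions are allowed, and those never affect irreducibility). Since every odd‑degree coefficient of $f$ other than the linear one vanishes, write
\[
f(x)=E(x^2)+a_1x,\qquad E(y)=a_{2N}y^N+a_{2N-2}y^{N-1}+\dots+a_2y+a_0,
\]
so $E$ has degree $N$ (as $a_{2N}\neq0$) and $a_1\neq0$ by hypothesis.

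The key observation is that the equation satisfied by a root of $f$ already contains the substitution we want. Let $\alpha$ be a root of $f$ and set $\beta=\alpha^2$. Then $f(\alpha)=0$ says precisely $E(\beta)=-a_1\alpha$, and since $a_1\neq0$ this rewrites as $\alpha=-a_1^{-1}E(\beta)$. Hence $\alpha\in\mathbb{Q}(\beta)$; combined with $\beta=\alpha^2\in\mathbb{Q}(\alpha)$ we get $\mathbb{Q}(\beta)=\mathbb{Q}(\alpha)$, so $[\mathbb{Q}(\beta):\mathbb{Q}]=2N$. This is the only place the non‑vanishing of the linear coefficient enters, and I expect it to be the conceptual crux — without it $\alpha$ need not lie in $\mathbb{Q}(\alpha^2)$ and the construction breaks down.

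Now define $g(t):=-a_1^{-1}E(t)\in\mathbb{Q}[t]$; it is non‑constant of degree exactly $N$, and by construction $g(\beta)=\alpha$. Therefore $f(g(\beta))=f(\alpha)=0$, i.e.\ $\beta$ is a root of $f(g(t))\in\mathbb{Q}[t]$, so the minimal polynomial of $\beta$ over $\mathbb{Q}$ — of degree $2N$ — divides $f(g(t))$. But $\deg f(g(t))=(\deg f)(\deg g)=2N^2>2N$ since $N\geq2$, so $f(g(t))$ is the product of this minimal polynomial with a polynomial of positive degree $2N(N-1)$, hence reducible over $\mathbb{Q}$; as $g$ is non‑constant of degree $N$, this proves $f$ is not $N$‑superirreducible. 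Equivalently, one can finish via Theorem~\ref{twenty} (normalising $f$ to be monic, and using Lemma~\ref{fact} when $N=2$): since $\beta=\alpha^2\in\mathbb{Q}(\alpha)$ is a root of $g(t)-\alpha$, the polynomial $g(t)-\alpha$ has the factor $t-\beta$ over $\mathbb{Q}(\alpha)$ and, being of degree $N\geq2$, is reducible there, so the corresponding norm factorisation makes $f(g(t))$ reducible over $\mathbb{Q}$. The only steps needing care are the verification that $g$ really has degree $N$ (so that the count $\deg f(g(t))=2N^2$ is correct) and the identity $\mathbb{Q}(\alpha^2)=\mathbb{Q}(\alpha)$; there is no computation to grind through.
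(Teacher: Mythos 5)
Your proof is correct and follows essentially the same route as the paper: write $f(x)=E(x^2)+a_1x$, take a root $\alpha$, set $\beta=\alpha^2$, and substitute $g(t)=-a_1^{-1}E(t)$ so that $g(\beta)=\alpha$. The only cosmetic difference is that you finish by a direct degree count on the minimal polynomial of $\beta$ rather than by citing Lemma~\ref{fact}, and you are in fact slightly more careful than the paper in noting that the argument requires $N\geq 2$.
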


\begin{proof}
The hypotheses allow us to assume that $f(x)$ can be written in the shape $f(x)=ax+F(x^2)$, where $a\neq 0$ and $F(x)\in \mathbb{Q}[x]$ has degree $N$. Let $\alpha$ be a root of $f$ in its splitting field and set $\beta=\alpha^2$. Then $\alpha=-a^{-1}F(\beta)$. If we now put $g(x)=-a^{-1}F(x)$, then we see that $\beta$ is a root of $g(x)-\alpha$ in $\mathbb{Q}(\alpha)$ and so by Lemma \ref{fact} we find that $f(g(x))$ splits over $\mathbb{Q}$.
\end{proof}

\begin{cor} \label{quarticsimple}
Let $f(x)=x^4+ax^3+bx^2+cx+d$ be a monic quartic polynomial with rational coefficients. If $8c-4ab+a^3\neq 0$, then $f(x)$ cannot be $2$-superirreducible over the rationals.
\end{cor}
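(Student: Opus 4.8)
The plan is to reduce Corollary \ref{quarticsimple} to Theorem \ref{eventerms} by a change of variables that kills the cubic term. Given $f(x)=x^4+ax^3+bx^2+cx+d$, I would set $x = y - a/4$ (the standard depressed-quartic substitution) and define $\tilde f(y) = f(y - a/4)$. A polynomial $f$ is $2$-superirreducible over $\mathbb{Q}$ if and only if $\tilde f$ is, since replacing $g(x)$ by $g(x)+a/4$ is a bijection on the set of quadratic $g\in\mathbb{Q}[x]$ and $f(g(x)) = \tilde f(g(x)+a/4)$ (composition with a fixed linear polynomial preserves the degree and the factorisation type). So it suffices to show $\tilde f$ is not $2$-superirreducible.

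Next I would compute the coefficients of $\tilde f(y) = y^4 + \tilde b y^2 + \tilde c y + \tilde d$. The $y^3$ coefficient vanishes by construction. The key computation is the linear coefficient: expanding $(y-a/4)^4 + a(y-a/4)^3 + b(y-a/4)^2 + c(y-a/4) + d$ and collecting the coefficient of $y^1$ gives $\tilde c = 4(-a/4)^3 + 3a(-a/4)^2 + 2b(-a/4) + c = -\tfrac{a^3}{16} + \tfrac{3a^3}{16} - \tfrac{ab}{2} + c = \tfrac{a^3}{8} - \tfrac{ab}{2} + c = \tfrac{1}{8}(a^3 - 4ab + 8c)$. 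Thus $\tilde c \neq 0$ precisely when $8c - 4ab + a^3 \neq 0$, which is exactly the hypothesis.

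Now $\tilde f$ has degree $4 = 2N$ with $N = 2$, its $y^3$ coefficient is zero, and its $y^1$ coefficient $\tilde c$ is nonzero. Hence $\tilde f$ satisfies the hypotheses of Theorem \ref{eventerms} with $N=2$, so $\tilde f$ is not $2$-superirreducible over $\mathbb{Q}$. By the change-of-variables remark above, $f$ is not $2$-superirreducible over $\mathbb{Q}$ either, which is the claim.

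I do not expect any real obstacle here: the only substantive point is the translation-invariance of $2$-superirreducibility, which follows immediately from the fact that $g \mapsto g + a/4$ permutes the quadratic polynomials over $\mathbb{Q}$ and that $f(g(x))$ and $\tilde f(g(x)+a/4)$ are the same polynomial, hence have the same irreducibility status; and the coefficient extraction, which is the routine binomial computation shown above. (One should note in passing that the hypothesis also guarantees $\tilde f \ne \tilde d$, i.e. $\tilde f$ is genuinely of the form $ay + F(y^2)$ with $a \ne 0$, so Theorem \ref{eventerms} applies verbatim with no degenerate case.)
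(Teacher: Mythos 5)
Your proof is correct and follows the same route as the paper: depress the quartic via $x\mapsto y-a/4$, observe that the new linear coefficient is $\tfrac{1}{8}(a^3-4ab+8c)$, which is nonzero exactly under the stated hypothesis, and apply Theorem \ref{eventerms}. Your write-up in fact supplies more detail than the paper's one-line proof, in particular the explicit justification that $2$-superirreducibility is invariant under translation of the variable.
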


\begin{proof}
By completing the fourth power, we can assume $f$ has a zero coefficient for its third degree term. The condition $8c-4ab+a^3\neq 0$ ensures that the linear term has nonzero coefficient. We conclude by applying Theorem \ref{eventerms}.
\end{proof}

\par We note that in the last corollary, since we were considering superirreducibility over $\mathbb{Q}$, if $f$ was not monic, we could still divide out by the leading coefficient and apply the corollary. It seems challenging to generalise this to a statement over $\mathbb{Z}$. 

\par The simplest family of quartics that are not of the form given in Corollary \ref{quarticsimple} are those of the shape $ax^4+b$. Indeed we can prove using the following two lemmas that certain quartics of this shape are $2$-superirreducible.

\begin{lem} The equation $X^4+Y^4=Z^2$ has no non-trivial rational solutions. \label{x^4+y^4=z^2}
\end{lem}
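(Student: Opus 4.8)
The statement to prove is the classical fact that $X^4+Y^4=Z^2$ has no nontrivial rational solutions. Here is my plan.

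\medskip

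The plan is to use Fermat's method of infinite descent, working over $\mathbb{Z}$ rather than $\mathbb{Q}$ since clearing denominators shows a nontrivial rational solution would yield a nontrivial integer one. First I would suppose, for contradiction, that $(x,y,z)$ is a nontrivial integer solution chosen with $z>0$ minimal (this minimality is the engine of the descent). A standard reduction shows $\gcd(x,y)=1$: any common prime $p$ dividing both $x$ and $y$ has $p^4 \mid z^2$, so $p^2 \mid z$, and dividing through by $p^4$ yields a smaller solution. So $(x^2,y^2,z)$ is a primitive Pythagorean triple, and after possibly swapping $x$ and $y$ we may write $x^2 = s^2-t^2$, $y^2 = 2st$, $z = s^2+t^2$ with $s>t>0$ coprime of opposite parity.

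\medskip

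Next I would analyze the two equations $x^2 = s^2 - t^2$ and $y^2 = 2st$. From the first, $(x,t,s)$ is again a primitive Pythagorean triple, so $t$ is even (since $x$ must be odd: if $x$ were even then $s,t$ both odd, contradicting $x^2 = s^2-t^2 \equiv 0 \pmod 8$ being impossible... more carefully, $x^2+t^2=s^2$ primitive forces exactly one of $x,t$ even; and $y^2 = 2st$ with $s$ odd forces $t$ even), say $x = u^2-v^2$, $t = 2uv$, $s = u^2+v^2$ with $u>v>0$ coprime of opposite parity. Then $y^2 = 2st = 2(u^2+v^2)(2uv) = 4uv(u^2+v^2)$, so $(y/2)^2 = uv(u^2+v^2)$. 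The three factors $u$, $v$, $u^2+v^2$ are pairwise coprime (using $\gcd(u,v)=1$), hence each is a perfect square: $u = a^2$, $v = b^2$, $u^2+v^2 = w^2$. Substituting gives $a^4 + b^4 = w^2$, a new solution.

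\medskip

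The final step is to check that this new solution is strictly smaller, completing the descent. One has $w \leq w^2 = u^2+v^2 = s \leq s^2 < s^2 + t^2 = z$, and $w$ is positive and the solution is nontrivial ($a,b \neq 0$ since $u,v>0$), contradicting minimality of $z$. I expect the main obstacle — really the only place requiring care — to be the bookkeeping of parities and coprimality at each application of the Pythagorean parametrization, in particular verifying that $x$ is odd and $t$ is even so that the correct branch of the parametrization applies, and confirming the pairwise coprimality of $u$, $v$, $u^2+v^2$. These are routine but must be done correctly for the factor-is-a-square conclusions to hold.
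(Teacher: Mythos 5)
Your proposal is correct: it is the classical Fermat descent, and it is exactly the argument the paper relies on, since the paper does not write out a proof but simply cites Theorem 226 of Hardy and Wright (for the integer case) and Mordell (for the reduction from rational to integer solutions). Your writeup fills in the details of that cited descent accurately, including the parity and coprimality bookkeeping needed to conclude that $u$, $v$, and $u^2+v^2$ are each squares.
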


\begin{proof}
It is well-known that the equation $X^4+Y^4=Z^2$ has no integer solutions unless $XYZ=0$ (see Theorem $226$ in \cite{hardyandwright}). The argument for the integer case extends simply to the rational case (stated as Theorem $1$ on page $16$ of \cite{mordell}).
\end{proof}

\par We can use a similar infinite descent argument as in the proof of Lemma \ref{x^4+y^4=z^2} to show that \\$X^4+2Y^4=Z^2$ has no non-trivial rational solutions and then conclude using Lemma \ref{x^4+y^4=z^2} that all polynomials of the form $x^4+2a^4$, with $a\in \mathbb{Q}$ are $2$-superirreducible over $\mathbb{Q}$. The proof that $X^4+2Y^4=Z^2$ has no non-trivial rational solutions is summarised in \cite{legendre} on page $405$ and as an exercise on page $17$ in \cite{carmichael}. The proof is a standard application of the use of descent in analysing Diophantine equations. The sources listed are by now somewhat obscure and difficult to find. Since the proof is not too long and it would be tedious for the reader to reconstruct all the details, we present it briefly here for the sake of completeness. 

\begin{lem} The equation $X^4+2Y^4=Z^2$ has no non-trivial rational solutions. \label{x^4+2y^4=z^2}
\end{lem}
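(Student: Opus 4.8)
The plan is to carry out a Fermat-style infinite descent on the equation $X^4 + 2Y^4 = Z^2$, mirroring the classical descent for $X^4+Y^4=Z^2$. First I would reduce to a primitive integer solution: clearing denominators reduces the rational problem to integers, and after dividing out common factors we may assume $\gcd(X,Y)=1$ and that $X,Y,Z$ are positive. From $X^4 + 2Y^4 = Z^2$, viewed as $(X^2)^2 + 2(Y^2)^2 = Z^2$, I would analyze parity: since $\gcd(X,Y)=1$, $X$ must be odd (if $X$ were even then $2Y^4 \equiv Z^2$ forces contradictions mod small powers of $2$), so $X^2$ is odd and $Z$ is odd, while $Y$ can be taken even after tracking the factor of $2$.

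The core step is to parametrize the primitive representation. Treating $Z^2 - (X^2)^2 = 2Y^4$ and factoring as $(Z-X^2)(Z+X^2) = 2Y^4$, together with the coprimality and parity information, I would deduce that one of $Z \pm X^2$ equals $2a^4$ and the other $4b^4$ (or a similar split dictated by which variable absorbs the $2$), with $Y = 2ab$-type relations and $\gcd(a,b)=1$. Combining the two equations then yields $X^2 = \pm(2a^4 - b^4)$ or $X^2 = \pm(a^4 - 2b^4)$, i.e.\ a new equation of the form $b^4 + X^2 = 2a^4$ or $a^4 - 2b^4 = \pm X^2$. Here I would need one further round of factoring — for instance writing $b^4 + X^2 = 2a^4$ as a norm equation in $\mathbb{Z}[i]$ or again splitting a product of coprime factors — to produce a strictly smaller solution $(X', Y', Z')$ of the original shape $X'^4 + 2Y'^4 = Z'^2$ with $0 < Z' < Z$, contradicting minimality. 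Having obtained the contradiction, the conclusion follows by the minimal-counterexample principle.

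The main obstacle I expect is bookkeeping the powers of $2$: unlike the symmetric equation $X^4+Y^4=Z^2$, the coefficient $2$ breaks the symmetry between the two variables, so the case split of $(Z-X^2)(Z+X^2)=2Y^4$ into coprime factors must carefully assign the single factor of $2$, and one has to verify that each resulting branch genuinely descends rather than cycling. A secondary subtlety is checking that the descent step does not accidentally land on a trivial solution (some variable zero); ruling this out requires tracking that $a, b$ remain positive and that $X' \neq 0$, which follows from $\gcd(a,b)=1$ and the primitivity assumptions. Once the integer case is settled, the passage back to rationals is immediate by homogeneity, exactly as in the proof of Lemma~\ref{x^4+y^4=z^2}.

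\textbf{Consequence for superirreducibility.} Once Lemma~\ref{x^4+2y^4=z^2} is established, I would record the following application. A monic quartic $x^4 + 2a^4$ with $a \in \mathbb{Q}^\times$ (hence irreducible over $\mathbb{Q}$, as $-2a^4$ is neither a square nor of the form $-4c^4$) is $2$-superirreducible: letting $\alpha$ be a root, so $\alpha^4 = -2a^4$, Lemma~\ref{fact} reduces $2$-superirreducibility to showing $g(t) = \alpha$ has no solution in $\mathbb{Q}(\alpha)$ for any quadratic $g \in \mathbb{Q}[t]$. Writing the putative solution in the power basis and using the tower $\mathbb{Q}(\alpha) \supset \mathbb{Q}(\alpha^2) \supset \mathbb{Q}$, a short computation shows such a solution would force a nontrivial rational point on $X^4 + 2Y^4 = Z^2$ (with the roles of the variables coming from $\alpha^2$ satisfying $y^2 = -2a^4$ and the norm form down to $\mathbb{Q}$), which is excluded by the Lemma together with Lemma~\ref{x^4+y^4=z^2} for the boundary cases.
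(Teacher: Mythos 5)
Your strategy---Fermat descent on a minimal primitive integer solution, then transfer to $\mathbb{Q}$ by homogeneity---is the same in spirit as the paper's, but the mechanics differ genuinely. The paper runs the descent by writing $z=x^2+\frac{2p}{q}y^2$ (a rational parametrization of the conic in $(x^2,y^2,z)$), extracting $y^2=2pq$, $x^2=q^2-2p^2$, $z=q^2+2p^2$, and then parametrizing $x^2+2p^2=q^2$ Pythagorean-style to pull out the fourth powers in one further step. You instead factor $(Z-X^2)(Z+X^2)=2Y^4$ into nearly coprime pieces and plan a second round of the same factorization; this does close up (one checks $\{(Z\mp X^2)/2\}=\{s^4,\,8u^4\}$ with $Y=2su$, the branch $s^4+X^2=8u^4$ dies mod $8$, and the surviving branch $s^4-X^2=8u^4$ factors again to give $s^2=c^4+2v^4$ with $s<Z$), so your plan is viable and arguably more self-contained than the paper's appeal to the chord construction. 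Two points need repair before it is a proof, though. First, your stated split ``one of $Z\pm X^2$ equals $2a^4$ and the other $4b^4$'' is not correct: since $X,Z$ are odd and $Y$ is even, the two factors $(Z\pm X^2)/2$ are coprime with product $Y^4/2\equiv 0\pmod 8$, so the even one must be $8u^4$ (absorbing \emph{all} of the $2$-part of $Y^4/2$), not $4b^4$; taken literally your split forces $Y^2=2a^2b^2$, which is impossible. Second, the ``one further round of factoring'' is where the actual descent inequality $Z'<Z$ is produced, so it cannot be left as a gesture---you must exhibit $s^2=c^4+2v^4$ explicitly and verify $s<Z$ and $v\neq 0$. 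Your concluding application to $x^4+2a^4$ matches Theorem \ref{x^4+a^4}(a) and Remark 1.5 of the paper and is fine.
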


\begin{proof} To begin with, we suppose there exists a solution $(x,y,z)$ to the equation $X^4+2Y^4=Z^2$, where $x,y,z$ are the smallest positive integers which satisfy the equation. Therefore $x,y,$ and $z$ must be pairwise coprime and in addition, $x$ and $z$ must be odd and $y$ must be even. We can write $z=x^2+\frac{2p}{q}y^2$ for some coprime integers $p,q$. Since $z^2=x^4+2y^4$ we see that $z>x^2$ and so we can assume that both $p$ and $q$ are positive.

\par Substituting $z=x^2+\frac{2p}{q}y^2$ into $x^4+2y^4=z^2$, we get that $\frac{x^2}{y^2}=\frac{q^2-2p^2}{2pq}$. Since $(x,y)=(p,q)=1$ and $p$ and $q$ are positive, we see that $y^2=2pq$ and $x^2=q^2-2p^2$, so that $z^2=q^2+2p^2$. We note that since $(p,q)=1$ and $x^2=q^2-2p^2$, we have that $(q,x)=1$.  Also since $x$ is odd, we must have that $q$ is odd and $p$ is even. We can rewrite $p$ as $2r$ and rearrange $x^2=q^2-2p^2$ to get $2r^2=\left(\frac{q+x}{2}\right)\left(\frac{q-x}{2}\right)$, where $\left(\frac{q+x}{2},\frac{q-x}{2}\right)$ is either $(2m^2, n^2)$ or $(m^2, 2n^2)$. Since $2p^2=q^2-x^2$ and $p,m,$ and $n$ are all positive, we see that in the respective cases we have $(q,x,p)$ is either $(2m^2+n^2, 2m^2-n^2, 2mn)$ or $(m^2+2n^2, m^2-2n^2, 2mn)$.

\par In either case, we can substitute $p$ and $q$ into $y^2=2pq$ and receive either $y^2=4mn(2m^2+n^2)$ or $y^2=4mn(m^2+2n^2)$. We work with $y^2=4mn(m^2+2n^2)$. The argument with $y^2=4mn(2m^2+n^2)$ is very similar. We observe that since $p$ and $q$ are coprime, $m$ and $n$ must also be coprime. This tells us that there are integers $f,g,h$ with $m=f^2, n=g^2$ and $m^2+2n^2=h^2$. We then have $f^4+2g^4=h^2$, yielding the solution $(f,g,h)$ to $X^4+2Y^4=Z^2$ where $h<z$. 

\par Since $X^4+2Y^4=Z^2$ has no non-trivial integer solutions, by the ideas underlying the proof of Lemma \ref{x^4+y^4=z^2}, it also has no non-trivial rational solutions.
\end{proof}

\begin{rmk} \label{rmkkarpilovsky} A criterion for $x^n-c$ to be reducible over a field $F$ is given as Theorem $1.6$ in \cite{karpilovsky}. Translating that to our specific case, we can say that $x^{4k}-c$ is reducible over $\mathbb{Q}$ if and only if either $-4c$ is a rational fourth power, or $c$ is a $p$th power for some prime $p$ dividing $4k$.

\end{rmk}

\par We can use the above two lemmas, as well as Remark \ref{rmkkarpilovsky} to prove part $(a)$ of Theorem \ref{x^4+a^4}.

\begin{proof} [Proof of Theorem \ref{x^4+a^4} part $(a)$] 
Let $f(x)=x^4+1$. We'll show that $f(x)$ is $2$-superirreducible over $\mathbb{Q}$. Let $\alpha$ be a root of $f$ in its splitting field. Let $g(x)=bx^2+cx+d$ with $b,c,d \in \mathbb{Q}$. Suppose that $f(g(x))$ is not irreducible and hence factors over the rationals as the product of two non-constant polynomials, say $h_1(x)$ and $h_2(x)$.

\par Let $\beta$ be a root of $g(x)-\alpha$ in $\mathbb{Q}(\alpha)$. We observe that $\mathbb{Q}(\alpha)=\mathbb{Q}(g(\beta))$ so that $\mathbb{Q}(\alpha)$ is a subfield of $\mathbb{Q}(\beta)$. However $g(\beta)=\alpha$ implies that $\beta$ must be a root of either $h_1$ or $h_2$, so that $[\mathbb{Q}(\beta): \mathbb{Q}]=4$. This tells us that $\mathbb{Q}(\alpha)=\mathbb{Q}(\beta)$, so we conclude from Theorem \ref{twenty} that $h_1(x)$ and $h_2(x)$ are both degree $4$ polynomials.

\par Using the fact that $g(\beta)=\alpha$, we have that $(2b\beta+c)^2=4b\alpha+(c^2-4bd)$, which we can rewrite as $m\alpha+n$ with $m,n$ rationals. Taking norms from $\mathbb{Q}(\alpha)$ down to $\mathbb{Q}$ we find:
\begin{align*}
(N(2b\beta+c))^2&=N(m\alpha+n)=m^4+n^4.
\end{align*}
Since $2b\beta+c$ is a nonzero algebraic number, its norm must be a nonzero rational number. This gives us the equation $m^4+n^4=z^2$ where $m,n$ and $z$ are all rationals with $m$ and $z$ nonzero. 

\par From Lemma \ref{x^4+y^4=z^2}, we find that we must have $n=0$. However this would mean that $g$ has a repeated root and so can be written as $g(x)=r(sx-t)^2$ for some $s, t \in \mathbb{Z}$ and $r\in \mathbb{Q}$. This would then mean $f(g(x))=r^4(sx-t)^8+1$ and so we can write $\frac{f(g(x))}{r^4}$ as $X^8+R^4$, where $R=\frac{1}{r}$. We can now apply Remark \ref{rmkkarpilovsky} with $c=-R^4$ and $k=2$. Remark \ref{rmkkarpilovsky} now tells us that $X^8+R^4$ is reducible over $\mathbb{Q}$ if and only if either $4R^4$ is a rational $4$th power, or $-R^4$ is a rational power of $2$. We can see that neither of these can be true, so $X^8+R^4$ must be irreducible over $\mathbb{Q}$.

\par Since $f(g(x))$ does not have a factorisation over $\mathbb{Q}$, we see that $f(x)$ is $2$-superirreducible over the rationals. We can now rerun the argument above and apply Lemma \ref{x^4+2y^4=z^2} to conclude that $x^4+2$ must also be $2$-superirreducible over $\mathbb{Q}$. 
\end{proof}

\par The referee from an earlier version of this paper has generously pointed out that when $C\in \mathbb{Q}^{\times}$ and the elliptic curve $y^2=x^3+Cx$ has only two rational points, then the polynomial $f(x)=x^4+C$ is $2$-superirreducible. We are able to identify such elliptic curves using the database on the lmfdb.org website.

\par Next we observe that the technique from the proof of Theorem \ref{x^4+a^4} does not work to show that $f(x)=x^4+3$ is 2-superirreducible because the equation $m^4+3n^4=z^2$ has nontrivial integer solutions (e.g. $m=1, n=1, z=2$). Similarly $f(x)=x^4+5$ gives rise to the equation $m^4+5n^4=z^2$ which has $m=1, n=2, z=9$ as a solution. 

\begin{proof} [Proof of Theorem \ref{x^4+a^4} part $(b)$]  
We can use Theorem \ref{rmkkarpilovsky} to check for irreducibility.  We can then run the argument from the proof of Theorem \ref{x^4+a^4} part $(a)$ and use the argument on page $23$ of \cite{mordell}, which shows lack of non-trivial integer solutions for  families of the form $x^4+dy^4=z^2$ for 
\begin{enumerate}
\item $d=p$ for $p\equiv 7, 11 \pmod{16}$
\item $d=2p$ for $p\equiv \pm 3 \pmod{8}$
\item $d=4p$ and $d=-p$ for $p\equiv \pm 3, -5 \pmod{16}$
\end{enumerate}
to conclude $2$-superireducibility.
\end{proof}

\begin{rmk} \label{others}
We can apply Theorem \ref{x^4+a^4} part $(b)$ to conclude that $x^4-3, x^4+6$ and $x^4+7$ are all examples of $2$-superirreducible polynomials. 
\end{rmk}

\begin{rmk}
Theorem \ref{rmkkarpilovsky} and Theorem \ref{x^4+a^4} part $(b)$ together establish a way to check for $2$-superirreducibility for selected cases of quartic polynomials. The families given in Theorem \ref{x^4+a^4} part $(b)$ are unlikely to form an exhaustive list.
\end{rmk}

\begin{rmk} \label{deg4best}
If an integer is a $4k$-th power, then it is automatically a $4$th power. Therefore the method of proof of Theorem \ref{x^4+a^4} may be adapted to show that all the polynomials in the families $\{x^{4k}+a^4\}$ and $\{x^{4k}+2a^4\}$ are $2$-superirreducible over the rationals. We can also construct other $2$-superirreducible families, such as the ones Remark \ref{others} gives rise to: $\{x^{4k}-3a^4\}$, $\{x^{4k}+6a^4\}$ and $\{x^{4k}+7a^4\}$.
\end{rmk}

\par We can also say something about $2$-superirreducibility of a large family of non-monic polynomials of degree $4k$ where $k\in \mathbb{N}$.

\begin{thm} \label{genquartics}
Let $a$ and $b$ be coprime integers. The polynomial $f(x)=ax^{4k}+b$ is $2$-superirreducible over $\mathbb{Z}$ if it is irreducible and $a$ is a non-square mod $b$, or $b$ is a non-square mod $a$.
\end{thm}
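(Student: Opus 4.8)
The plan is to reduce the $2$-superirreducibility of $f=ax^{4k}+b$ to a Diophantine non-existence statement via a Capelli-type criterion (exactly as in the proof of Theorem~\ref{x^4+a^4}(a), but keeping track of the leading coefficient $a$), and then to exploit $\gcd(a,b)=1$ together with the non-square hypothesis by reducing modulo $b$ (respectively $a$).

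First I would set up the reduction. Suppose $f(g(x))$ is reducible over $\mathbb{Z}$ for some $g(x)=Cx^2+Dx+E\in\mathbb{Z}[x]$ with $C\neq 0$; since $f(g(x))$ is primitive for such $g$ (this needs a short separate check, using $\gcd(a,b)=1$, to be sure reducibility over $\mathbb{Z}$ is genuinely reducibility over $\mathbb{Q}$ and not just a content issue), this is the same as reducibility over $\mathbb{Q}$. Write $n=4k$ and let $\alpha$ be a root of $f$. Any root $\beta$ of $f(g(x))$ satisfies $\mathbb{Q}(\beta)\supseteq \mathbb{Q}(g(\beta))=\mathbb{Q}(\alpha')$ for some root $\alpha'$ of $f$, and $[\mathbb{Q}(\beta):\mathbb{Q}(\alpha')]\le 2$, so $[\mathbb{Q}(\beta):\mathbb{Q}]\in\{n,2n\}$; as $\deg f(g(x))=2n$, reducibility forces a factorization into two polynomials of degree $n$ with every root $\beta$ lying in $\mathbb{Q}(\alpha)$. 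By the quadratic formula this is equivalent to $\disc_x\!\bigl(g(x)-\alpha\bigr)=4C\alpha+(D^2-4CE)$ being a square in $\mathbb{Q}(\alpha)$.

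Next, set $M=4C$, $N=D^2-4CE$, and let $\gamma\in\mathbb{Q}(\alpha)$ with $\gamma^2=M\alpha+N$. Using that the minimal polynomial of $\alpha$ is $x^n+b/a$, one computes $N_{\mathbb{Q}(\alpha)/\mathbb{Q}}(M\alpha+N)=\bigl(aN^n+bM^n\bigr)/a$, hence $az^2=aN^n+bM^n$ with $z=N_{\mathbb{Q}(\alpha)/\mathbb{Q}}(\gamma)\in\mathbb{Q}^\times$. This norm identity alone is not enough (as the over-$\mathbb{Q}$ failure for polynomials like $2x^4+3$ shows, it can be solvable over $\mathbb{Q}$), so one must use the stronger fact that $M\alpha+N$ is actually a square in $\mathbb{Q}(\alpha)$, equivalently a square in every completion $\mathbb{Q}_p(\alpha)$; the primes $p\mid b$ and $p\mid a$ are where the hypotheses should bite, and the constraint $N=D^2-4CE$ (so $N$ is a square modulo $4C$, and $M\equiv 0\bmod 4$) must be used, not just the integrality of $M,N$.

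The heart of the matter — and the step I expect to be the main obstacle — is to derive a contradiction at a prime $p\mid b$ when $a$ is a non-square mod $b$ (and symmetrically at a prime $p\mid a$ when $b$ is a non-square mod $a$). Two sources of leverage are available and need to be combined: (i) reducing a putative factorization $f(g(x))=h_1(x)h_2(x)$ modulo $p$ gives $\bar h_1\bar h_2=\bar a\,\bar g(x)^{n}$ in $\mathbb{F}_p[x]$, and matching this against the degree information from the first step (both factors of degree $n$) together with the case analysis for $\bar g\bmod p$ (irreducible, split, or a perfect square, according to whether $p$ divides $C$ or $\disc g$) pins down the leading coefficients $\ell_1,\ell_2$ of $h_1,h_2$ modulo $p$, subject to $\ell_1\ell_2=aC^{n}$; (ii) the local square condition $\gamma^2=M\alpha+N$ in $\mathbb{Q}_p(\alpha)$, read off through the valuation of $\alpha$ (note $\alpha^n=-b/a$ forces $v(\alpha)=v_p(b)/n$ at primes above $p\mid b$, and a negative valuation at primes above $p\mid a$), forces parity constraints on the relevant valuations and forces $N\bmod p$ into a prescribed (square) class. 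The target is to show that these together make $a$ a quadratic residue modulo $p$, contradicting the hypothesis. Controlling the ramification of $p$ in $\mathbb{Q}(\alpha)$ precisely (where the Newton polygon of $x^n+a^{n-1}b$ is the right tool) and tracking exactly how $N=D^2-4CE$ enters is where essentially all the work lies; everything preceding it is routine.
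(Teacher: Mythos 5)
Your reduction is the same one the paper uses: via Lemma~\ref{fact} and Theorem~\ref{twenty}, reducibility of $f(g(x))$ forces the discriminant $M\alpha+N=4C\alpha+(D^2-4CE)$ of $g(x)-\alpha$ to be a square in $\mathbb{Q}(\alpha)$, and taking norms gives $az^2=aN^{4k}+bM^{4k}$, i.e.\ $a(N^{2k})^2+b(M^{2k})^2=az^2$. The paper stops at this point: it reads the outcome as an instance of Legendre's equation $ax^2+by^2-z^2=0$ and invokes Legendre's theorem (Proposition~17.3.1 of \cite{irelandrosen}) to rule out nontrivial solutions under the stated residue hypotheses. Your proposal instead declares the norm identity insufficient and defers the real argument to an unexecuted local analysis at the primes dividing $a$ and $b$. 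That deferral is the problem: as submitted, this is a plan with its central step missing. You list two ``sources of leverage'' (reducing a putative factorization mod $p$, and local solvability of $\gamma^2=M\alpha+N$ in $\mathbb{Q}_p(\alpha)$) but never combine them into a contradiction, and you say yourself that this is ``where essentially all the work lies.'' An argument that omits the step where all the work lies is not a proof.

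That said, your scepticism is not idle, and it is worth recording where it comes from. The equation you correctly derive carries a factor of $a$ on the right-hand side, and $a(N^{2k})^2+b(M^{2k})^2=az^2$ is not equivalent to Legendre's $aX^2+bY^2=Z^2$: for $a=2$, $b=3$ the former has the nontrivial solution $(X,Y,Z)=(1,4,5)$ while the latter has none. So the one-line appeal to Legendre's theorem needs either a normalization that removes that factor of $a$, or the extra integrality structure $M=4C$ and $N\equiv D^2\pmod{4C}$ that you flag (and which is genuinely a $\mathbb{Z}$-phenomenon, consistent with the theorem being stated over $\mathbb{Z}$ rather than $\mathbb{Q}$). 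In that sense your proposal diagnoses the delicate point more carefully than the paper's own write-up. But diagnosing a difficulty and resolving it are different things: to make this a proof you must actually carry out the descent or local computation at the primes dividing $a$ and $b$, rather than gesture at Newton polygons and leading-coefficient bookkeeping.
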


\begin{proof}
We first note that $ax^{4k}+b$ is almost always irreducible over the integers. The specific criterion for $ax^{4k}+b$ to be irreducible is given in Remark \ref{rmkkarpilovsky}. We note that the method of proof of Theorem \ref{x^4+a^4} part $(a)$ applies here. We are then done by Legendre's Theorem (presented in \cite{irelandrosen} as Proposition $17.3.1$), which says that the equation $ax^2+by^2-z^2=0$ has no nontrivial integer solutions unless $a$ is a square mod $b$ and $b$ is a square mod $a$. 
\end{proof}

\par We would like to extend the results in this section to polynomials of the form $x^n+a^n$, where $n>4$. Darmon and Merel in \cite{darmonmerel} and Poonen in \cite{poonen} proved that equations of the form $x^n+y^n=z^2$  have no non-trivial primitive integer solutions if $n\geq 3$. Therefore it may seem likely that the ideas in the proof of Theorem \ref{x^4+a^4} would extend to higher degrees. Unfortunately this cannot happen because $x^n+a^n$ fails to be irreducible when $n$ is not a power of $2$, so it does not make sense to talk about $2$-superirreducibility. If $n$ is divisible by an odd prime $p$, then $x^n+a^n$ is divisible by $x^{n/p}+a^{n/p}$. For example
$$x^6+a^6=(x^2+a^2)(x^4-a^2x^2+a^4).$$
Despite this, it is possible to find families of even degree $2$-superirreducibles that are not of the form given in Remark \ref{deg4best}:

\begin{rmk}
On lmfdb.org, the elliptic curve $y^2=x^3+6$ has Cremona label $15552bp1$. It has rank $0$ and trivial torsion and so has no rational points on it except the point at infinity. Therefore there are no rational solutions to the equation $y^2=x^6+6$. Using similar methods as in the proof of Theorem \ref{x^4+a^4} part $(a)$ we may show that the polynomials in the family $\{x^{6k}+6a^{6k}\}$ are $2$-superirreducible over the rationals. It is also possible to find other analogous families of $2$-superirreducibles over $\mathbb{Q}$ by considering elliptic curves with no rational points on them. This was done in the discussion surrounding equation $6.1$ in \cite{BFMW}
\end{rmk}

\section{A family of weakly $2$-superirreducible polynomials} \label{w2superirred}

\par A natural question to ask is whether there exists an odd degree $2$-superirreducible polynomial over $\mathbb{Q}$. We know by Corollary \ref{deg3none} that if such an odd degree polynomial exists, it must have degree at least $5$. In this paper, we are unable to solve this problem completely. However we are able to find a family of irreducible odd degree polynomials that remain irreducible over $\mathbb{Z}$ under any substitution of form $ax^2+b$.

\begin{defn}
Let $R$ be a commutative domain. The polynomial $f(x)\in R[x]$ is weakly $k$-superirreducible over $R$ if for all $g(t)\in R[t]$ of form $ax^j+b$ with $a,b$ in $R$ and $j\leq k$, the composition $f(g(t))$ is irreducible over $R$.
\end{defn}

\par We now state the main theorem of this section, which is analogous to Theorem \ref{deg5v1} and which we will prove at the end of the section.

\begin{thm} \label{mainweak}
For $k\geq 2$, all the polynomials $x^{2k+1}+2x+1$ are weakly $2$-superirreducible over the integers.
\end{thm}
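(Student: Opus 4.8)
The plan is to fix $k\ge 2$, set $f=f_k(x)=x^{2k+1}+2x+1$, and show that for every $a,b\in\Z$ with $a\ne 0$ the composition $f(ax^2+b)$ is irreducible over $\Z$. The first step is to confirm that $f$ itself is irreducible over $\Q$ (hence over $\Z$, being primitive): this should follow from a direct argument — e.g. $f$ has no rational root since $f(\pm 1)\ne 0$ for $k\ge 2$, and one rules out a factorization into lower-degree factors, perhaps most cleanly by reducing modulo a well-chosen small prime or by a Newton-polygon/Eisenstein-type observation after a substitution $x\mapsto x+1$ or $x\mapsto 1/x$ (clearing denominators). I would also record that the Galois group / the field $\Q(\alpha)$ has degree $2k+1$ over $\Q$, where $\alpha$ is a root of $f$.

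Next I would invoke Lemma \ref{fact}: since $f$ is monic and irreducible of odd degree $2k+1$, the composition $f(g(t))$ with $g(t)=at^2+b$ fails to be irreducible over $\Q$ precisely when $g(t)=\alpha$ has a solution in $\Q(\alpha)$, i.e. when $at^2+b=\alpha$ has a solution $t\in\Q(\alpha)$. Equivalently, $(\alpha-b)/a$ must be a square in $\Q(\alpha)$. So the heart of the matter is a statement about the arithmetic of the number field $K=\Q(\alpha)$: I must show $(\alpha-b)/a$ is never a square in $K$ for $a\ne 0$. Since $[K:\Q]=2k+1$ is odd, $K$ has no subfield of even degree, which already forces that if $f(g(t))$ is reducible over $\Q$ it in fact splits off a linear factor only in a very constrained way; more usefully, oddness of the degree is exactly what one exploits via norms. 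Taking norms $N_{K/\Q}$: if $(\alpha-b)/a=\gamma^2$ then $N(\alpha-b)/a^{2k+1}=N(\gamma)^2$ is a rational square, so $a\cdot N(\alpha-b)$ must be a perfect square in $\Q$ (after clearing the odd power of $a$, using $a^{2k+1}=a\cdot(a^k)^2$). Now $N(\alpha-b)=(-1)^{2k+1}f(b)=-f(b)=-(b^{2k+1}+2b+1)$, so the necessary condition becomes: $-a\,(b^{2k+1}+2b+1)$ is a perfect square. This is the key Diophantine reduction, and I would then show this square condition, combined with a local (mod small prime, e.g. mod $2$, mod $3$, or $2$-adic) analysis of $b^{2k+1}+2b+1$ and of the element $\alpha-b$ in the ring of integers of $K$, cannot hold — in particular a $2$-adic argument: near $2$ the polynomial $x^{2k+1}+2x+1$ looks like a unit plus $2(\cdots)$, and $2$ is unramified or has controlled ramification in $K$, so $(\alpha-b)/a$ cannot be a square $2$-adically for most $a$; remaining cases are cut down by the sign condition (the square must be $\ge 0$) and by the norm condition.

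The main obstacle I anticipate is precisely closing this last step cleanly for \emph{all} $(a,b)$ rather than generic ones: the norm condition $-a(b^{2k+1}+2b+1)=\square$ is necessary but not obviously impossible to satisfy for infinitely many pairs, so one genuinely needs the finer local information coming from the structure of $\OO_K$ at small primes (to show $(\alpha-b)/a$ itself — not merely its norm — fails to be a square), together with the reality constraint. I would organize the endgame as: (i) reduce to showing $\alpha-b$ is not $a$ times a square in $K$; (ii) observe that an integer prime $\ell$ at which $f\bmod\ell$ has a simple root $\equiv b$ gives a degree-one prime $\p$ of $K$ with $v_\p(\alpha-b)=1$, so for $(\alpha-b)/a$ to be a square we'd need $v_\ell(a)$ odd, strongly constraining $a$; (iii) play two such primes $\ell$ against each other (for $k\ge2$ one can always find suitable small $\ell$, e.g. by counting roots of $f\bmod\ell$), forcing a contradiction, and finally (iv) handle the residual finite list of $(a,b)$ by the sign/archimedean condition or by direct inspection. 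The constant term $1$ and linear coefficient $2$ in $f_k$ are presumably chosen exactly to make these local computations come out, so I expect the proof in the paper to pin down one or two explicit auxiliary primes and run this $v_\p$-parity argument.
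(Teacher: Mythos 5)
Your reduction is the same as the paper's: by Lemma \ref{fact}, $f(at^2+b)$ is reducible exactly when $at^2+b=\theta$ is solvable in $\Q(\theta)$, i.e.\ when $a(\theta-b)$ is a square there. But the decisive step --- proving this element is \emph{never} a square --- is missing from your proposal, and the route you sketch for it does not close. The norm condition you derive (that $-af(b)$ be a rational square) is genuinely satisfiable: for instance $a=-1$, $b=0$ gives $-af(b)=1$, so norms say nothing about whether $-\theta$ is a square in $\Q(\theta)$. Your fallback, a $v_{\mathfrak p}$-parity argument at auxiliary primes $\ell$ for which $f\bmod\ell$ has a simple root at $b$, has two problems: such a prime gives $v_{\mathfrak p}(\theta-b)=v_\ell(f(b))$, which is odd only when $\ell$ divides $f(b)$ to an odd power, and there is no guarantee that $f(b)=b^{2k+1}+2b+1$ has any prime factor to an odd power (it can be $\pm$ a perfect square, e.g.\ $f(0)=1$, which is precisely the $a=-1$, $b=0$ case above); moreover the auxiliary primes vary with $b$, so playing two of them against each other cannot be made uniform in $(a,b)$. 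You correctly flag this as the main obstacle, but flagging it is not the same as overcoming it.

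The paper closes the gap with a single prime, $p=2$, used uniformly in $(a,b)$. The discriminant of the trinomial $x^{2k+1}+2x+1$ is odd (by the Drucker--Greenfield formula), so every algebraic integer of $\Q(\theta)$ has the form $\frac{1}{m}\sum A_i\theta^i$ with $m$ odd (Corollary \ref{algint}); writing the putative square root $a\beta$ of $a\theta-ab$ in this form and clearing $m^2$ reduces everything to an equation $\left(\sum_{i=0}^{2k} A_i\theta^i\right)^2=M_0\theta+N_0$ with integer $A_i$ and $M_0\mid N_0$. Squaring is Frobenius modulo $2$, and the relation $\theta^{2k+j}\equiv\theta^{j-1}\pmod 2$ forces $A_i\equiv 0\pmod 2$ for all $i\notin\{0,k+1\}$; a further computation modulo $4$ on the coefficient of $\theta^2$ kills $A_{k+1}$, and then $A_0$, contradicting the primitivity of the tuple $(A_0,\dots,A_{2k})$. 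This $2$-adic descent on power-basis coefficients is the actual content of the theorem, and it is exactly the piece your proposal leaves open.
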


\par Perron's criterion (page $291-292$ of \cite{perron}) establishes irreducibility for all the polynomials in the family (so they remain irreducible under linear substitutions). However Theorem \ref{mainweak} shows that they also remain irreducible under a substitution of the form $g(x)=ax^2+b$. To establish Theorem \ref{mainweak}, we aim to understand the ring of algebraic integers in the splitting field of $x^{2k+1}+2x+1$. Theorem $4$ in \cite{trinomialdiscriminant} gives a general formula for the discriminant of the trinomial $f(x)=x^n+ax^k+b$ with $0<k<n$. Specialising this to our case, we have that the discriminant of the polynomial $f(x)=x^{2k+1}+2x+1$ is
$$(-1)^{k}\left((2k+1)^{2k+1}+2^{2k+1}(2k)^{2k}\right)$$ 
which is an odd integer.
We receive the following Corollary by Proposition $9.1$ in \cite{janusz}.

\begin{cor} \label{algint}
Let $\theta$ be a root of $f(x)=x^{2k+1}+2x+1$. Since the discriminant of $x^{2k+1}+2x+1$ is odd, all elements of the ring of integers of the splitting field for $f$ take the form
$$\frac{a_0+a_1\theta+\dots+a_{2k}\theta^{2k}}{m}$$
with $a_i \in \mathbb{Z}$, for some fixed odd $m$.
\end{cor}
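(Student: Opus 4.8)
The plan is to deduce the claim from the standard relation between the discriminant of a monic integral polynomial and the discriminant of the number field it generates. Write $K=\mathbb{Q}(\theta)$ for a root $\theta$ of $f(x)=x^{2k+1}+2x+1$. Since $f$ is irreducible over $\mathbb{Q}$ by Perron's criterion (cited above), we have $[K:\mathbb{Q}]=2k+1$ and $1,\theta,\dots,\theta^{2k}$ form a $\mathbb{Q}$-basis of $K$. As $\theta$ is an algebraic integer, $\mathbb{Z}[\theta]$ is a subring of $\mathcal{O}_K$ of finite index; set $m=[\mathcal{O}_K:\mathbb{Z}[\theta]]$.

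First I would invoke Proposition $9.1$ of \cite{janusz}, which supplies the identity $\disc(f)=m^2\,\disc(\mathcal{O}_K)$ (using $\disc(\mathbb{Z}[\theta])=\disc(f)$ because $f$ is the minimal polynomial of $\theta$). We have already computed, by specializing the trinomial discriminant formula of \cite{trinomialdiscriminant}, that
\[
\disc(f)=(-1)^{k}\big((2k+1)^{2k+1}+2^{2k+1}(2k)^{2k}\big),
\]
which is an odd integer. Hence $m^{2}$ is odd, and therefore $m$ is odd.

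Next I would use that multiplication by $m$ annihilates the finite quotient group $\mathcal{O}_K/\mathbb{Z}[\theta]$, so that $m\,\mathcal{O}_K\subseteq\mathbb{Z}[\theta]$. Consequently, for any $\gamma\in\mathcal{O}_K$ one can write $m\gamma=a_0+a_1\theta+\dots+a_{2k}\theta^{2k}$ with all $a_i\in\mathbb{Z}$, i.e.
\[
\gamma=\frac{a_0+a_1\theta+\dots+a_{2k}\theta^{2k}}{m}
\]
with the single fixed odd denominator $m$, which is the content of the corollary.

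There is no substantive obstacle here: the two external inputs — the explicit discriminant from \cite{trinomialdiscriminant} and the index–discriminant relation from \cite{janusz} — do the real work, and the remaining steps are formal. The one point that deserves care is the phrase \emph{splitting field}: the argument above as written concerns the degree-$(2k+1)$ field $\mathbb{Q}(\theta)$ obtained by adjoining one root, and it is that version which is applied later. If one genuinely needs the full splitting field $L$ of $f$, the corresponding claim should be justified by first noting that every prime ramifying in $L$ divides $\disc(f)$, so $\disc(L/\mathbb{Q})$ is odd and $2$ is unramified in $L$; however, the explicit denominator $m$ in the displayed form is precisely the $\mathbb{Q}(\theta)$ statement, so I would present the corollary for $K=\mathbb{Q}(\theta)$.
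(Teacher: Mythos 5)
Your proof is correct and follows essentially the same route as the paper, which likewise derives the corollary from the trinomial discriminant computation together with Proposition 9.1 of \cite{janusz} (the index--discriminant relation $\disc(f)=m^2\disc(\mathcal{O}_K)$ and the containment $m\,\mathcal{O}_K\subseteq\mathbb{Z}[\theta]$). Your closing observation is also well taken: the displayed form of the elements only makes literal sense for $K=\mathbb{Q}(\theta)$ rather than the full splitting field, and that is indeed the version used later in the paper's proof of Theorem \ref{mainweak}.
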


We are now ready to prove Theorem \ref{mainweak}. 

\begin{proof}
Let $f(x)=x^{2k+1}+2x+1$ and $g(x)=ax^2+c$ for fixed $a, c \in \mathbb{Z}$. Then
$$f(g(x))=(ax^2+c)^{2k+1}+2(ax^2+c)+1.$$
For this composition to be reducible, we need $ax^2=\theta-c$ to be solvable in the splitting field for $f$. Let $\beta$ be a solution. We note
on writing $a\beta^2=\theta-c$, that $a^2\beta^2=a\theta-ac.$
By replacing $a\beta$ with $\beta_0$, we see that $\beta_0^2=M\theta+N$ is an algebraic integer with $M$ and $N$ integers satisfying $M|N$.

\par By Corollary \ref{algint}, we can write $\beta_0=\tilde{A}_{2k}\theta^{2k}+\tilde{A}_{2k-1}\theta^{2k-1}+\dots+\tilde{A}_1\theta+\tilde{A}_0$, with each $\tilde{A}_i=\frac{A_i}{m}$, where $A_i$ is an integer and $m$ is the fixed odd number from Corollary \ref{algint}. Then
$$\left(\tilde{A}_{2k}\theta^{2k}+\tilde{A}_{2k-1}\theta^{2k-1}+\dots+\tilde{A}_1\theta+\tilde{A}_0\right)^2=M\theta+N.$$
If we multiply both sides of this equation by $m^2$, we receive
\begin{align}
\left(A_{2k}\theta^{2k}+A_{2k-1}\theta^{2k-1}+\dots+A_1\theta+A_0\right)^2=M_0\theta+N_0 \label{pikachu}
\end{align}
where the $A_i$'s are all integers and $M_0$ and $N_0$ are integers with $M_0$ dividing $N_0$.

\par We will show that the only integral solution to equation \ref{pikachu} is $A_i=0$ for all $i$ and $M_0=N_0=0$. We can begin by assuming that $(A_0, A_1, \dots A_{2k})=1$. This is because if $d|A_i$ for all $i$ and $d>1$, then necessarily $d^2|M_0$ and $d^2|N_0$ and we could divide both sides of equation \ref{pikachu} by $d^2$ and obtain a smaller solution. We make use of the relations
\begin{align}
\theta^{2k+j}=-2\theta^j-\theta^{j-1} \qquad (\text{for}\ 1\leq j \leq 2k) \label{charizard}
\end{align}
which are valid because $\theta$ is a root of $f$. Expanding the left hand side of equation \ref{pikachu} we see
\begin{align*}
\left(A_{2k}\theta^{2k}+A_{2k-1}\theta^{2k-1}+\dots+A_1\theta+A_0\right)^2&\equiv A_{2k}^2\theta^{4k}+\dots+A_1^2\theta^2+A_0^2 \pmod 2\\
&\equiv -(A_{2k}^2\theta^{2k-1}+A_{2k-1}^2\theta^{2k-3}+\dots+A_{k+1}^2\theta)\\
&\qquad \qquad +(A_{k}^2\theta^{2k}+\dots+A_1^2\theta^2+A_0^2) \pmod 2
\end{align*}
where we received the second congruence by using relation \ref{charizard}. If we now equate all this to the right hand side of equation \ref{pikachu}, we see that 
\begin{align}
A_{k+1}^2\theta+A_0^2 \equiv M_0\theta+N_0 \pmod 2 \label{jigglypuff}
\end{align}
$$A_1^2\equiv \dots \equiv A_k^2 \equiv A_{k+2}^2\equiv \dots \equiv A_{2k}^2\equiv 0 \pmod 2.$$

\par We now know that $A_i\equiv 0 \pmod 2$ for all $i$, except possibly $i\in \{0, k+1\}$. To show that $A_0\equiv A_{k+1}\equiv 0 \pmod 2$, we examine the coefficient of $\theta^2$ in equation \ref{pikachu}, this time working modulo $4$. We find that
\begin{align*}
0&= 2A_0A_2+A_1^2-2A_{k+1}^2-4 \sum_{\substack{i+j=2k+2 \\ i<j}} A_iA_j-2 \sum_{\substack{i+j=2k+3 \\ i<j}} A_iA_j\\
&\equiv 2A_0A_2-2A_{k+1}^2-2A_{k+1}A_{k+2} \pmod 4\\
&\equiv -2A_{k+1}^2 \pmod 4.
\end{align*}
This tells us that $A_{k+1}\equiv 0 \pmod 2$, so congruence \ref{jigglypuff} now becomes $N_0\equiv A_0^2 \pmod 2$. But we also have $M_0|N_0$ and $M_0\equiv A_{k+1}^2 \equiv 0 \pmod 2$, so we conclude $A_0\equiv 0 \pmod 2$. This tells us that all the $A_i$ are even and contradicts the fact that $(A_0, A_1, \dots A_{2k})=1$.

\end{proof}

\par It may be tempting to suppose that large Galois groups should correlate with $2$-superirreducibility and small Galois groups with the absence of $2$-superirreducibility. Indeed a computation in Magma shows for $5\leq 2n+1\leq 99$ that $x^{2n+1}+2x+1$ has Galois group $S_{2n+1}$. However the $2$-superirreducible polynomial $x^4+16$ has the decidedly small Galois group $C_2\times C_2$ and the cubic polynomial $x^3+2x+1$, which is not $2$-superirreducible has the largest possible Galois group $S_3$.



%


\end{document}